\documentclass[12pt]{amsart}

\usepackage{graphics}
\usepackage{color}
\usepackage[a4paper, margin=3cm]{geometry}

\usepackage{amssymb,enumerate}
\usepackage{amsmath}
\usepackage{bbm}           
\usepackage{bm}
\usepackage{eso-pic,graphicx}
\usepackage{tikz}
\usepackage{cite}
\usepackage{esint}
\usepackage[colorlinks=true, pdfstartview=FitV, linkcolor=blue, citecolor=blue, urlcolor=blue]{hyperref}
\usepackage{booktabs}
\usepackage{graphicx}
\usepackage{pslatex}
\usepackage{amsmath,amsfonts}
\usepackage{rotating}
\usepackage{amssymb}
\usepackage{verbatim}
\usepackage{rotating}
\usepackage{mathrsfs}

\makeatletter
\def\Ddots{\mathinner{\mkern1mu\raise\p@
\vbox{\kern7\p@\hbox{.}}\mkern2mu
\raise4\p@\hbox{.}\mkern2mu\raise7\p@\hbox{.}\mkern1mu}}
\makeatother

\def\XXint#1#2#3{{\setbox0=\hbox{$#1{#2#3}{\int}$}
\vcenter{\hbox{$#2#3$}}\kern-.5\wd0}}

\begin{document}
\newtheorem{theorem}{Theorem}
\newtheorem{proposition}[theorem]{Proposition}
\newtheorem{conjecture}[theorem]{Conjecture}
\def\theconjecture{\unskip}
\newtheorem{corollary}[theorem]{Corollary}
\newtheorem{lemma}[theorem]{Lemma}
\newtheorem{claim}[theorem]{Claim}
\newtheorem{sublemma}[theorem]{Sublemma}
\newtheorem{observation}[theorem]{Observation}
\theoremstyle{definition}
\newtheorem{definition}{Definition}
\newtheorem{notation}[definition]{Notation}
\newtheorem{remark}[definition]{Remark}
\newtheorem{question}[definition]{Question}
\newtheorem{questions}[definition]{Questions}
\newtheorem{example}[definition]{Example}
\newtheorem{problem}[definition]{Problem}
\newtheorem{exercise}[definition]{Exercise}
 \newtheorem{thm}{Theorem}
 \newtheorem{cor}[thm]{Corollary}
 \newtheorem{lem}{Lemma}[section]
 \newtheorem{prop}[thm]{Proposition}
 \theoremstyle{definition}
 \newtheorem{dfn}[thm]{Definition}
 \theoremstyle{remark}
 \newtheorem{rem}{Remark}
 \newtheorem{ex}{Example}
 \numberwithin{equation}{section}
\def\C{\mathbb{C}}
\def\R{\mathbb{R}}
\def\Rn{{\mathbb{R}^n}}
\def\Rns{{\mathbb{R}^{n+1}}}
\def\Sn{{{S}^{n-1}}}
\def\M{\mathbb{M}}
\def\N{\mathbb{N}}
\def\Q{{\mathbb{Q}}}
\def\Z{\mathbb{Z}}
\def\F{\mathcal{F}}
\def\L{\mathcal{L}}
\def\S{\mathcal{S}}
\def\supp{\operatorname{supp}}
\def\essi{\operatornamewithlimits{ess\,inf}}
\def\esss{\operatornamewithlimits{ess\,sup}}

\numberwithin{equation}{section}
\numberwithin{thm}{section}
\numberwithin{theorem}{section}
\numberwithin{definition}{section}
\numberwithin{equation}{section}

\def\earrow{{\mathbf e}}
\def\rarrow{{\mathbf r}}
\def\uarrow{{\mathbf u}}
\def\varrow{{\mathbf V}}
\def\tpar{T_{\rm par}}
\def\apar{A_{\rm par}}

\def\reals{{\mathbb R}}
\def\torus{{\mathbb T}}
\def\scriptm{{\mathcal T}}
\def\heis{{\mathbb H}}
\def\integers{{\mathbb Z}}
\def\z{{\mathbb Z}}
\def\naturals{{\mathbb N}}
\def\complex{{\mathbb C}\/}
\def\distance{\operatorname{distance}\,}
\def\support{\operatorname{support}\,}
\def\dist{\operatorname{dist}\,}
\def\Span{\operatorname{span}\,}
\def\degree{\operatorname{degree}\,}
\def\kernel{\operatorname{kernel}\,}
\def\dim{\operatorname{dim}\,}
\def\codim{\operatorname{codim}}
\def\trace{\operatorname{trace\,}}
\def\Span{\operatorname{span}\,}
\def\dimension{\operatorname{dimension}\,}
\def\codimension{\operatorname{codimension}\,}
\def\nullspace{\scriptk}
\def\kernel{\operatorname{Ker}}
\def\ZZ{ {\mathbb Z} }
\def\p{\partial}
\def\rp{{ ^{-1} }}
\def\Re{\operatorname{Re\,} }
\def\Im{\operatorname{Im\,} }
\def\ov{\overline}
\def\eps{\varepsilon}
\def\lt{L^2}
\def\diver{\operatorname{div}}
\def\curl{\operatorname{curl}}
\def\etta{\eta}
\newcommand{\norm}[1]{ \|  #1 \|}
\def\expect{\mathbb E}
\def\bull{$\bullet$\ }

\def\blue{\color{blue}}
\def\red{\color{red}}

\def\xone{x_1}
\def\xtwo{x_2}
\def\xq{x_2+x_1^2}
\newcommand{\abr}[1]{ \langle  #1 \rangle}

\newcommand{\Norm}[1]{ \left\|  #1 \right\| }
\newcommand{\set}[1]{ \left\{ #1 \right\} }
\newcommand{\ifou}{\raisebox{-1ex}{$\check{}$}}
\def\one{\mathbf 1}
\def\whole{\mathbf V}
\newcommand{\modulo}[2]{[#1]_{#2}}
\def \essinf{\mathop{\rm essinf}}
\def\scriptf{{\mathcal F}}
\def\scriptg{{\mathcal G}}
\def\scriptm{{\mathcal M}}
\def\scriptb{{\mathcal B}}
\def\scriptc{{\mathcal C}}
\def\scriptt{{\mathcal T}}
\def\scripti{{\mathcal I}}
\def\scripte{{\mathcal E}}
\def\scriptv{{\mathcal V}}
\def\scriptw{{\mathcal W}}
\def\scriptu{{\mathcal U}}
\def\scriptS{{\mathcal S}}
\def\scripta{{\mathcal A}}
\def\scriptr{{\mathcal R}}
\def\scripto{{\mathcal O}}
\def\scripth{{\mathcal H}}
\def\scriptd{{\mathcal D}}
\def\scriptl{{\mathcal L}}
\def\scriptn{{\mathcal N}}
\def\scriptp{{\mathcal P}}
\def\scriptk{{\mathcal K}}
\def\frakv{{\mathfrak V}}
\def\C{\mathbb{C}}
\def\D{\mathcal{D}}
\def\R{\mathbb{R}}
\def\Rn{{\mathbb{R}^n}}
\def\rn{{\mathbb{R}^n}}
\def\Rm{{\mathbb{R}^{2n}}}
\def\r2n{{\mathbb{R}^{2n}}}
\def\Sn{{{S}^{n-1}}}
\def\M{\mathbb{M}}
\def\N{\mathbb{N}}
\def\Q{{\mathcal{Q}}}
\def\Z{\mathbb{Z}}
\def\F{\mathcal{F}}
\def\L{\mathcal{L}}
\def\G{\mathscr{G}}
\def\ch{\operatorname{ch}}
\def\supp{\operatorname{supp}}
\def\dist{\operatorname{dist}}
\def\essi{\operatornamewithlimits{ess\,inf}}
\def\esss{\operatornamewithlimits{ess\,sup}}
\def\dis{\displaystyle}
\def\dsum{\displaystyle\sum}
\def\dint{\displaystyle\int}
\def\dfrac{\displaystyle\frac}
\def\dsup{\displaystyle\sup}
\def\dlim{\displaystyle\lim}
\def\bom{\Omega}
\def\om{\omega}

\author[F. Liu]{Feng Liu}
\address{Feng Liu:
College of Mathematics and System Science\\
Shandong University of Science and Technology\\
Qingdao, Shandong 266590\\
People's Republic of China}
\email{FLiu@sdust.edu.cn}

\author[S. Liu]{Simin Liu}
\address{Simin Liu:
College of Mathematics and System Science\\
Shandong University of Science and Technology\\
Qingdao, Shandong 266590\\
People's Republic of China}
\email{13734187503@163.com}
	
\author[M. Shi]{Minglei Shi$^{*}$}
\address{Minglei Shi:
        Department of Mathematics\\
        Zhejiang Shuren University\\
        Shaoxing, Zhejiang 312028\\
        People's Republic of China}
\email{stfoursml@gmail.com}

\keywords{Weak type $(1,\,1)$ bound criterion, singular integral, rough kernel,
Christ--Journ\'{e} type commutator.\\
\indent{2020 Mathematics Subject Classification.} Primary 42B20, 42B25.}
	
\thanks{$^{*}$ Corresponding author, e-mail address: stfoursml@gmail.com}

\date{\today}
\title[A criterion on weak type $(1,\,1)$ bound of rough singular integrals]
{A criterion on weak type $(1,\,1)$ bound of rough singular integrals}
\maketitle
	
\begin{abstract}
In this paper we establish a criterion on weak type $(1,\,1)$ bound of the
following rough singular integral
$$T_{\Omega,K,a}f(x)={\rm p.v.}\int_{\mathbb{R}^n}\Omega(x-y)K(x,y)m_{x,y}a f(y)dy,$$
where $m_{x,y}a=\int_0^1a(sx+(1-s)y)ds$ with $a\in L^1(\mathbb{R}^n)$ and
$\hat{a}\in L^1(\mathbb{R}^n)$, $\Omega$ is homogeneous of degree zero,
integrable in $\mathbb{S}^{n-1}$ and satisfies the cancellation condition
$\int_{\mathbb{S}^{n-1}}\Omega(\theta)d\sigma(\theta)=0$ and $K$ is a
measurable function defined on $\mathbb{R}^n\times\mathbb{R}^n\setminus
\{(x,x):x\in\mathbb{R}^n\}$ and satisfies a H\"{o}lder condition. By assuming 
that $\Omega\in L\log L(\mathbb{S}^{n-1})$ and the operator $T_{\Omega,
K}f(x)={\rm p.v.}\int_{\mathbb{R}^n}\Omega(x-y)K(x,y)f(y)dy$ is bounded on
$L^2(\mathbb{R}^n)$, we prove the weak type (1,1) bound of $T_{\Omega,K,a}$. 
As several applications, we obtain a large class of singular integral operators 
which possess weak type $(1,\,1)$ bound. The main results of this paper 
essentially extend and generalize some known ones.
\end{abstract}

\bigskip
	
\section{Introduction}\label{S1}
	
\medskip

Singular integral theory is one of the core components in harmonic analysis,
and plays a key role in the study of partial differential equations, complex
analysis, and other fields. Over the past several years considerable attention has been devoted to establishing the mapping properties for singular integrals
and related operators on Lebesgue spaces. 
It is well known that many important integral operators are not bounded on $L^1$, including the Hilbert transform, Riesz transforms, the Hardy–Littlewood maximal operator, maximal operators with rough kernels, and standard Calder\'{o}n--Zygmund singular integral operators and so on.
It makes the investigation on the $L^1$ behaviors
for various integral operators more complex and interesting. Recently, a
considerable amount of attention has been given to investigating the weak type
$(1,\,1)$ bound for various singular integral operators (see \cite{CG,CLW,CWW,Ch,
CR,CS,DL,Hof2,Hon,HLTX,Se,See,Tao}). In this paper we continue to study this
topic. More precisely, we shall establish a criterion on weak type $(1,\,1)$
bound of a large class of singular integral operators with rough kernels
in $L\log L(\mathbb{S}^{n-1})$.

Let $\Omega$ be homogeneous of degree zero, integrable in the unit sphere
$\mathbb{S}^{n-1}$ and satisfy the cancellation condition
$$\int_{\mathbb{S}^{n-1}}\Omega(\theta)d\sigma(\theta)=0.\eqno(1.1)$$
The classical singular integral operator $T_\Omega$ is defined by
$$T_{\Omega}f(x)={\rm p.v.}\int_{\mathbb{R}^n}\frac{\Omega(x-y)}{|x-y|^n}f(y)dy.$$
As is well known, $T_{\Omega}$ is a typical convolution singular integral
operator, which was introduced by Calder\'{o}n and Zygmund \cite{CZ} who
established the $L^p$ boundedness for $T_\Omega$ for $1<p<\infty$ by
assuming $\Omega\in L\log L(\mathbb{S}^{n-1})$, that is
$$\int_{\mathbb{S}^{n-1}}|\Omega(\theta)|\log({\rm e}+|\Omega(\theta)|)d\sigma(\theta)<\infty.$$
Later on, the above result was independently improved by Ricci and Weiss
\cite{RW} and Connett \cite{Con} to $\Omega\in H^1(\mathbb{S}^{n-1})$ (the
Hardy space on unit sphere). For the case $p=1$, when $n=2$, Christ \cite{Ch}
and Hofmann \cite{Ho1} independently obtained the weak type $(1,\,1)$ bound for
$T_\Omega$ under the condition $\Omega\in L^q(\mathbb{S}^{n-1})$ for some $q>1$.
Subsequently, the above result was improved by Seeger \cite{Se} to the case
$\Omega\in L\log L(\mathbb{S}^{n-1})$ and $n\geq2$. Very recently, Chen et
al. \cite{CWW} established the weak type $(1,\,1)$ boundedness for $T_\Omega$
under the condition $\Omega\in B_q^{0,0}(\mathbb{S}^{n-1})$ (the block spaces).
Other extensions can be found in \cite{CZ4,DL,Ste,Tao}.

The following proper inclusion relations are well known:
$$L^\infty(\mathbb{S}^{n-1})\subsetneq L^r(\mathbb{S}^{n-1})\,(1<r<\infty)\subsetneq L\log L(\mathbb{S}^{n-1})\subsetneq H^1(\mathbb{S}^{n-1})\subsetneq L^1(\mathbb{S}^{n-1}),$$
$$L^q(\mathbb{S}^{n-1})\subsetneq B_q^{0,0}(\mathbb{S}^{n-1})\subsetneq H^1(\mathbb{S}^{n-1}),\ \ 1<q<\infty,$$
$$B_{q_1}^{0,0}(\mathbb{S}^{n-1})\subsetneq B_{q_2}^{0,0}(\mathbb{S}^{n-1}),\ \ \forall 1<q_2<q_1<\infty,$$
$$B_q^{0,0}(\mathbb{S}^{n-1})\nsubseteq L(\log L)^{1+\epsilon}(\mathbb{S}^{n-1}),\ \ \epsilon>0.$$
Very recently, Chen et al. \cite{CJWW} proved that
$$B_q^{0,0}(\mathbb{S}^{n-1})\subsetneq L\log L(\mathbb{S}^{n-1}),$$
which is a proper inclusion.

In contrast to the extensive literature on rough singular integrals, comparatively little is known about Christ-–Journ\'{e} type commutators with rough kernels.
Recall that the usual Christ--Journ\'{e} type commutator
with rough kernels is defined by
$$T_{\Omega,a}f(x)={\rm p.v.}\int_{\mathbb{R}^n}\frac{\Omega(x-y)}{|x-y|^n}m_{x,y}af(y)dy,$$
where $m_{x,y}a=\int_0^1a(sx+(1-s)y)ds$ with $a\in L^\infty(\mathbb{R}^n)$.
This type of commutator $T_{\Omega,a}$ was introduced by Hofmann, \cite{Hof1} who established its weighted $L^p$ boundedness under the assumption that $\Omega\in L^\infty(\mathbb{S}^{n-1})$. To the best of our knowledge, it remains unknown
whether $T_{\Omega,a}$ is of weak type $(1,\,1)$ for rough
kernels $\Omega$, even when $\Omega\in L^\infty(\mathbb{S}^{n-1})$
and $a\in L^\infty(\mathbb{R}^n)$. However, Ding and Lai
\cite{DL3} established the weak type $(1,\,1)$ bound and $L^p$ boundedness
of $T_{\Omega,a}$ under the condition $\Omega\in L\log L(\mathbb{S}^{n-1})$
by strengthening the symbol condition $a$. The main result of \cite{DL3} can
be listed as follows.

\medskip

\quad\hspace{-20pt}{\bf Theorem A} (\cite{DL3}) {\it Let $\Omega\in
L\log L(\mathbb{S}^{n-1})$ and satisfy $(1.1)$. Assume that
$a\in L^1(\mathbb{R}^n)$ and $\hat{a}\in L^1(\mathbb{R}^n)$, then
$T_{\Omega,a}$ is bounded on $L^p(\mathbb{R}^n)$ for $1<p<\infty$
and is of weak type $(1,\,1)$.}

\medskip

It is notable that the condition $\{a,\hat{a}\}\subset L^1(\mathbb{R}^n)$
implies $a\in L^\infty(\mathbb{R}^n)$. However, the converse is false.

The primary purpose of this paper is to present some new results focusing on
the weak type $(1,\,1)$ bound of the following rough singular integral
$$T_{\Omega,K,a}f(x)={\rm p.v.}\int_{\mathbb{R}^n}\Omega(x-y)K(x,y)m_{x,y}a f(y)dy,$$
where $m_{x,y}a=\int_0^1a(sx+(1-s)y)ds$ with $\{a,\hat{a}\}\subset L^1(\mathbb{R}^n)$
and $K$ satisfies a H\"{o}lder condition. We say that $K(x,y)$ satisfies a condition H\"{o}lder
 if there exist $\delta\in(0,1]$ and $C>0$ such that
$$|K(x,y)|\leq\frac{C}{|x-y|^n},\eqno(1.2)$$
$$|K(x_1,y)-K(x_2,y)|\leq C\frac{|x_1-x_2|^\delta}{|x_1-y|^{n+\delta}},\ \ \ {\rm if}\ |x_1-y|>2|x_1-x_2|,\eqno(1.3)$$
$$|K(x,y_1)-K(x,y_2)|\leq C\frac{|y_1-y_2|^\delta}{|x-y_1|^{n+\delta}},\ \ \ {\rm if}\ |x-y_1|>2|y_1-y_2|.\eqno(1.4)$$

It should be pointed out that the above singular integral operator $T_{\Omega,K,a}$
contains some classical examples. For example, when $K(x,y)=\frac{1}{|x-y|^{n}}$,
the operator $T_{\Omega,K,a}$ is just the Christ--Journ\'{e} type commutator with
rough kernels $T_{\Omega,a}$. Clearly, the function $K(x,y)=\frac{1}{|x-y|^{n}}$
satisfies the conditions (1.2)--(1.4). Moreover, from Theorem A we see that
$T_{\Omega,K,a}$ is bounded on $L^p(\mathbb{R}^n)$ for $1<p<\infty$ and of weak
type $(1,\,1)$ by assuming that $\Omega\in L\log L(\mathbb{S}^{n-1})$ satisfies
$(1.1)$ and $\{a,\hat{a}\}\subset L^1(\mathbb{R}^n)$ in the special case
$K(x,y)=\frac{1}{|x-y|^{n}}$.

It is natural and interesting to ask the following question.

\begin{question}\label{que1.1}
Let $\Omega\in L\log L(\mathbb{S}^{n-1})$ satisfy $(1.1)$ and
$\{a,\hat{a}\}\subset L^1(\mathbb{R}^n)$. Is $T_{\Omega,K,a}$ bounded on
$L^p(\mathbb{R}^n)$ for $1<p<\infty$ and of weak type $(1,\,1)$ if $K$
satisfies a H\"{o}lder condition?
\end{question}

This is the main motivation of this paper. In this paper, we will give an
affirmative answer to this question by assuming an a priori $L^2$ estimate
for the following operator
$$T_{\Omega,K}f(x)={\rm p.v.}\int_{\mathbb{R}^n}\Omega(x-y)K(x,y)f(y)dy,\ \ \ x\in\mathbb{R}^n.$$

\begin{remark}\label{rem1.1}
The operators $T_{\Omega,K}$ and $T_{\Omega,K,a}$ are closely related. Indeed, by the Fourier inversion formula, we have
$$m_{x,y}a=\frac{1}{(2\pi)^n}\iint_{[0,1]\times\mathbb{R}^n}\hat{a}(\eta)e^{is\langle\eta,x\rangle}e^{i(1-s)\langle y,\eta\rangle}d\eta ds.$$
This together with Fubini's theorem implies
$$T_{\Omega,K,a}f(x)=\iint_{[0,1]\times\mathbb{R}^n}a^{x,t}(\eta)T_{\Omega,K}(W^{\eta,t}f)(x)d\eta dt,\eqno(1.5)$$
where
$$a^{x,t}(\eta)=\frac{1}{(2\pi)^n}\hat{a}(\eta)e^{it\langle x,\eta\rangle},\ \ \ W^{\eta,t}(y)=e^{i(1-t)\langle y,\eta\rangle}.$$
By (1.5) and Minkowski's inequality, one obtains
$$\|T_{\Omega,K,a}\|_{L^p(\mathbb{R}^n)\rightarrow L^p(\mathbb{R}^n)}
\leq\|\hat{a}\|_1\|T_{\Omega,K}\|_{L^p(\mathbb{R}^n)\rightarrow L^p(\mathbb{R}^n)},\ \ 1<p<\infty.\eqno(1.6)$$
However, the weak type $(1,\,1)$ bound for $T_{\Omega,K}$ cannot imply
the weak type $(1,\,1)$ bound for $T_{\Omega,K,a}$.
\end{remark}

Now we establish a criterion for the weak type $(1,\,1)$ bound of $T_{\Omega,K,a}$.
\begin{theorem}\label{thm1.1}
Let $\Omega\in L\log L(\mathbb{S}^{n-1})$ satisfy $(1.1)$ and $K$ satisfy
H\"{o}lder's condition $(1.2)-(1.4)$. Assuming that $\{a,\hat{a}\}\subset
L^1(\mathbb{R}^n)$ and $T_{\Omega,K}$ is a bounded operator on
$L^2(\mathbb{R}^n)$ with bound $C\|\Omega\|_{L\log L(\mathbb{S}^{n-1})}$,
then we have
$$\|T_{\Omega,K,a}\|_{L^1(\mathbb{R}^n)\rightarrow L^{1,\infty}(\mathbb{R}^n)}\lesssim C_\Omega \|\hat{a}\|_1,$$
where
$$C_\Omega=\|\Omega\|_{L\log L(\mathbb{S}^{n-1})}+\int_{\mathbb{S}^{n-1}}|\Omega(\theta)|\Big(1+\log^{+}\frac{|\Omega(\theta)|}{\|\Omega\|_{L^1(\mathbb{S}^{n-1})}}\Big)d\sigma(\theta).$$
\end{theorem}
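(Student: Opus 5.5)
We follow Seeger's scheme for rough singular integrals, as adapted by Ding and Lai for Christ--Journ\'{e} commutators, with the convolution structure replaced by the H\"{o}lder conditions $(1.2)$--$(1.4)$ and the a priori $L^2$ bound for $T_{\Omega,K}$.

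By homogeneity, normalize $\|\hat a\|_1=1$ and fix $\lambda>0$ and $f\in L^1$. Take the Calder\'{o}n--Zygmund decomposition of $f$ at height $\lambda$: $f=g+\sum_Q b_Q$, with $\|g\|_\infty\lesssim\lambda$, $\|g\|_1\le\|f\|_1$, cubes $Q$ disjoint, $\int b_Q=0$, and $\sum|Q|\lesssim\|f\|_1/\lambda$.

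\textbf{Good part.} By (1.6) with $p=2$ and the hypothesis on $T_{\Omega,K}$, $T_{\Omega,K,a}$ is bounded on $L^2$ with norm $\lesssim\|\Omega\|_{L\log L}$. Chebyshev then gives
$$|\{|T_{\Omega,K,a}g|>\lambda\}|\lesssim\lambda^{-2}\|\Omega\|_{L\log L}^2\|g\|_2^2\lesssim \|\Omega\|_{L\log L}^2\,\|f\|_1/\lambda .$$
This is quadratic in $\|\Omega\|$; we remove that by scaling, normalizing $\|\Omega\|_{L^1}=1$ so that the logarithm appearing in $C_\Omega$ is scale-invariant. Remove $E=\bigcup 2^{10}Q$, whose measure is $\lesssim\|f\|_1/\lambda$.

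\textbf{Bad part.} Group $B_j=\sum_{\ell(Q)=2^j}b_Q$. Split the kernel $\Omega(x-y)K(x,y)$ dyadically as $\sum_s \Omega(x-y)K(x,y)\phi_s(x-y)$, where $\phi_s$ localizes $|x-y|\sim 2^s$. Only $s>j+5$ matters off $E$. Write $T_s$ for the operator with the localized kernel, including the factor $m_{x,y}a$.

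Decompose $\Omega$ by the level sets of $|\Omega|/\|\Omega\|_1$. Set $\Omega=\Omega_{s-j}+\Omega^{c}_{s-j}$, where $\Omega^{c}_{s-j}=\Omega\,\one_{\{|\Omega|>2^{\varepsilon(s-j)}\}}$. The large part is handled in $L^1$ directly via $(1.2)$ and $\|m_{x,y}a\|_\infty\le\|a\|_\infty\le\|\hat a\|_1$:
$$\sum_j\sum_{s>j}\|T_s[\Omega^c_{s-j}]B_j\|_1\lesssim\sum_j\|B_j\|_1\sum_{k>0}\int_{\{|\Omega|>2^{\varepsilon k}\}}|\Omega|\,d\sigma\lesssim\|f\|_1\int_{\mathbb{S}^{n-1}}|\Omega|\Big(1+\log^+|\Omega|\Big)\,d\sigma .$$
This is exactly where the second term of $C_\Omega$ arises.

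For the bounded part, follow Seeger: make a microlocal decomposition of $\Omega_{s-j}$ into angular sectors of width $2^{-\gamma(s-j)}$ with directions $e_\nu$. Split
$$T_s[\Omega_{s-j}]B_j=T_s\Gamma_{s,\nu}B_j+T_s(I-\Gamma_{s,\nu})B_j,$$
where $\Gamma_{s,\nu}$ is a frequency cutoff to a cone around $e_\nu$.

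Two features are not available in Seeger's convolution setting:
\begin{enumerate}[(i)]
\item \emph{The kernel is not a convolution kernel.} Here we use $(1.3)$--$(1.4)$. Replacing $K(x,y)$ by $K(x,c_Q)$ costs $2^{-\delta(s-j)}$, which is summable in $s$ after weighting by $2^{\varepsilon(s-j)}$ if $\varepsilon<\delta$.
\item \emph{The factor $m_{x,y}a$.} We use representation (1.5): $T_{\Omega,K,a}$ is an average over $(\eta,t)$, with weight $|\hat a(\eta)|$, of modulated versions of $T_{\Omega,K}$ applied to $W^{\eta,t}B_j$. The modulations $e^{it\langle x,\eta\rangle}$ are harmless in $L^2$ and $L^1$. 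However, multiplying $B_j$ by $W^{\eta,t}$ destroys the cancellation $\int b_Q=0$ and shifts frequencies, which interacts badly with the cone cutoffs $\Gamma_{s,\nu}$.
\end{enumerate}

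Following Ding--Lai, rather than modulating $B_j$, we keep the pointwise estimate $|m_{x,y}a-m_{x,c_Q}a|\lesssim$ (something small) only where needed. Concretely, the $L^2$ part $\sum_s T_s\Gamma_{s,\nu}B_j$ is estimated using almost-orthogonality in $\nu$ and the $L^2$ bound of the frozen-$y$ kernel. There $|m_{x,y}a|\le\|\hat a\|_1$ is enough, once we view $m_{x,y}a$ through (1.5) as a superposition of tensor products $e^{it\langle x,\eta\rangle}e^{i(1-t)\langle y,\eta\rangle}$. The modulation in $y$ is then absorbed by applying the Seeger decomposition to $W^{\eta,t}B_j$ but keeping the cancellation of $B_j$ for the $L^1$ estimate, which uses stationary phase along $e_\nu$. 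The main obstacle is this last step. The phase $e^{i(1-t)\langle y,\eta\rangle}$ is not smooth at scale $2^j$ uniformly in $\eta$. We must check that the integration-by-parts gain $2^{-N\gamma(s-j)}$ survives uniformly in $\eta$. The plan is to split according to whether $|\eta|\le 2^{-j}$ or $|\eta|> 2^{-j}$:
\begin{itemize}
\item for small $\eta$, the modulation is essentially constant on $Q$;
\item for large $\eta$, we use the cancellation of $m_{x,y}a$ in $y$, i.e. integrate by parts in $s$ directly within $m_{x,y}a$, giving a factor $|x-y|^{-1}$ times $\|a\|_1$.
\end{itemize}

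Summing the geometric series over $s>j$ and over $\nu$ gives the bound $\lesssim\|\Omega\|_{L\log L}\|f\|_1/\lambda$. This completes the argument.
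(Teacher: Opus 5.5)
Your skeleton matches the paper: good/bad split, discarding $\{|\Omega|>2^{\varepsilon(s-j)}\|\Omega\|_1\}$ in $L^1$ to produce the $\log^+$ term of $C_\Omega$, then Seeger's cone decomposition. But the step you call the main obstacle is not resolved, and the fix you propose does not work as stated.

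\textbf{The loss of cancellation under $W^{\eta,t}$.} For $|\eta|\le 2^{-j}$ the modulation is not ``essentially constant'' in any summable sense. The remainder $b_Q\,(W^{\eta,t}-W^{\eta,t}(c_Q))$ can have $L^1$ norm comparable to $\|b_Q\|_1$ and carries no cancellation. At each of the infinitely many kernel scales $2^s$, $s>j$, you then only have the trivial bound $\|\Omega\|_1\|b_Q\|_1$, which does not sum. For $|\eta|>2^{-j}$, integrating by parts in the averaging parameter of $m_{x,y}a$ gives the factor $|\langle x-y,\eta\rangle|^{-1}$, not $|x-y|^{-1}$. This factor degenerates when $x-y\perp\eta$, and no bound of the form $|x-y|^{-1}\|a\|_1$ follows.

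The missing idea is Lemma 2.1 (from Ding--Lai): do the Calder\'on--Zygmund decomposition of $fW^{\eta,t}$, not of $f$, using the fixed cubes of $\{\mathcal{M}_{HL}f>\tau\}$. Since $|fW^{\eta,t}|=|f|$, the cubes, the set $E$ and all constants are independent of $(\eta,t)$. Meanwhile $b_Q^{\eta,t}=\big(fW^{\eta,t}-\frac{1}{|Q|}\int_Q fW^{\eta,t}\big)\chi_Q$ has mean zero. In your language: split $W^{\eta,t}b_Q$ into its mean-zero part plus its average on $Q$; the average is $O(\lambda)$ on $\bigcup Q$ and is absorbed into the good function. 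Seeger's argument uses only support, size and cancellation of the bad pieces, never their smoothness, so the phase $e^{i(1-t)\langle y,\eta\rangle}$ is harmless. Every estimate is proved for $T_{\Omega,K}$ uniformly in $(\eta,t)$ and then integrated against $|\hat a(\eta)|$ via (1.5) and Minkowski.

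\textbf{Regularity of $K$ in $x$.} Your item (i) has a second gap. Freezing $y=c_Q$ via (1.4) handles the $y$-dependence, but $x\mapsto K(x,c_Q)$ is still only H\"older. The estimate away from the cone (Lemma 2.5) integrates by parts $N_1$ times in the radial variable and needs $\partial_r^{N_1}$ of the kernel. The paper mollifies $K_j$ in $x$ at scale $2^{j-l_\delta(s)}$ with $l_\delta(s)\approx 2\delta^{-1}\log_2 s$; here $j$ is the kernel scale and $s$ the scale gap. By (1.3) this costs $O(s^{-2})\,\|\Omega\|_1\|f\|_1$ in $L^1$, which is summable in $s$ (Lemma 2.2). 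Each derivative loses only a factor $s^{2/\delta}$ relative to $2^{-j}$, a polynomial loss absorbed by the exponential gains in Lemmas 2.3 and 2.5.

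\textbf{Linear dependence on $\Omega$.} Normalizing $\|\Omega\|_{L^1}=1$ does not remove the factor $\|\Omega\|_{L\log L}^2$ in your good-part bound. Take the Calder\'on--Zygmund height $\tau=\lambda/(\|\hat a\|_1\|\Omega\|_{L\log L})$, as the paper does. This makes both the good part and $|E|$ linear in $\|\Omega\|_{L\log L}$.
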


As direct consequences of Theorem \ref{thm1.1}, we have Theorem A and
the following conclusions.

\begin{corollary}\label{cor1.1}
Let $\Omega\in L\log L(\mathbb{S}^{n-1})$ satisfy $(1.1)$. Assume $\{a,
\hat{a}\}\subset L^1(\mathbb{R}^n)$, then $T_{\Omega,K,a}$ is bounded on
$L^p(\mathbb{R}^n)$ for $1<p<\infty$ and of weak type $(1,\,1)$, provided
that one of the following conditions holds:
\begin{enumerate}[{\rm (i)}]
\item $K(x,y)=\frac{A(x)-A(y)}{|x-y|^{n+1}}$ with $A\in Lip(\mathbb{R}^n)$,
$\Omega$ has vanishing moments of order $1$, i.e. for all multi-indices
$\gamma\in\mathbb{Z}_{+}^n$,
$$\int_{\mathbb{S}^{n-1}}\Omega(\theta)\theta^\gamma d\sigma(\theta)=0,\ \ \ \ \forall |\gamma|=1;\eqno(1.7)$$
\item $K(x,y)=\frac{1}{|x-y|^n}F\big(\frac{A(x)-A(y)}{|x-y|}\big)$, where
$A\in Lip(\mathbb{R}^n)$ and $F(t)=F(-t)$ for $t\in\mathbb{R}$ and
$F(t)$ is real analytic in $\{|t|\leq\|\nabla A\|_\infty\}$, $\Omega$ is
an odd function defined on $\mathbb{S}^{n-1}$;
\item $K(x,y)=\frac{1}{|x-y|^{n+ir}}$ with $r\in\mathbb{R}\setminus\{0\}$,
$\Omega$ satisfies $(1.1)$.
\end{enumerate}
\end{corollary}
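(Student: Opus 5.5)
The plan is to deduce each case of Corollary \ref{cor1.1} from Theorem \ref{thm1.1}. This requires checking two things for each kernel: that $K$ satisfies the H\"older conditions (1.2)--(1.4), and that $T_{\Omega,K}$ is bounded on $L^2(\mathbb{R}^n)$ with norm $\lesssim\|\Omega\|_{L\log L(\mathbb{S}^{n-1})}$. Once weak type $(1,1)$ is known, the $L^p$ bounds follow without further kernel-specific work. By (1.6), $T_{\Omega,K,a}$ is bounded on $L^2$. Marcinkiewicz interpolation between $L^2$ and weak $L^1$ then gives $1<p\le 2$. For $2<p<\infty$ we use duality. The adjoint has kernel $\overline{\Omega(y-x)K(y,x)}\,\overline{m_{y,x}a}$, and $m_{y,x}a=m_{x,y}a$ (substitute $s\mapsto 1-s$). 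Moreover $\overline{\Omega(-\cdot)}$ lies in $L\log L$ and satisfies (1.1), and $\overline{K(y,x)}$ satisfies (1.2)--(1.4). Finally $\bar a$ satisfies $\{\bar a,\widehat{\bar a}\}\subset L^1$, and the relevant operator $T_{\overline{\Omega(-\cdot)},\overline{K(y,x)}}$ is the adjoint of $T_{\Omega,K}$, hence $L^2$ bounded. So Theorem \ref{thm1.1} applies to the adjoint as well.

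\emph{Case (i).} Here $|K(x,y)|\le\|\nabla A\|_\infty|x-y|^{-n}$. For $|x_1-y|>2|x_1-x_2|$ we split
$$K(x_1,y)-K(x_2,y)=\frac{A(x_1)-A(x_2)}{|x_1-y|^{n+1}}+(A(x_2)-A(y))\Big(\frac{1}{|x_1-y|^{n+1}}-\frac{1}{|x_2-y|^{n+1}}\Big).$$
Using the Lipschitz bound, $|x_2-y|\sim|x_1-y|$, and the mean value theorem, this is $\lesssim\|\nabla A\|_\infty|x_1-x_2|\,|x_1-y|^{-n-1}$. So (1.3) holds with $\delta=1$, and (1.4) follows symmetrically. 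The $L^2$ (indeed $L^p$) boundedness of the rough Calder\'on commutator for $\Omega\in L\log L(\mathbb{S}^{n-1})$ with the vanishing moment (1.7), with bound $\lesssim\|\nabla A\|_\infty\|\Omega\|_{L\log L}$, is a known result from the literature on rough Calder\'on commutators, and we will invoke it.

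\emph{Case (ii).} Put $t(x,y)=(A(x)-A(y))/|x-y|$, so $|t|\le\|\nabla A\|_\infty$. By analyticity, $F$ and $F'$ are bounded on this interval, which gives (1.2). For (1.3), $|t(x_1,y)-t(x_2,y)|\lesssim|x_1-x_2|/|x_1-y|$ by the same splitting as in (i). Combined with the mean value theorem for $F$ and for $|x-y|^{-n}$, this gives $\delta=1$; (1.4) is symmetric. For the $L^2$ bound we use the method of rotations. Since $\Omega$ is odd and $F$ is even, the full kernel is odd, and polar coordinates give
$$T_{\Omega,K}f(x)=\tfrac12\int_{\mathbb{S}^{n-1}}\Omega(\theta)\,\mathrm{p.v.}\!\int_{\mathbb{R}}F\Big(\frac{A(x)-A(x-r\theta)}{r}\Big)f(x-r\theta)\frac{dr}{r}\,d\sigma(\theta).$$
On each line parallel to $\theta$, the inner operator is a one-dimensional Calder\'on-type operator whose Lipschitz function is $A$ restricted to that line. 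Expanding $F$ in a power series and applying the Coifman--McIntosh--Meyer bounds for the Calder\'on commutators, with their polynomial growth in the commutator order, shows it is bounded on $L^2(\mathbb{R})$ uniformly in $\theta$ and in the line. Fubini then gives $\|T_{\Omega,K}\|_{2\to2}\lesssim\|\Omega\|_{L^1}\le\|\Omega\|_{L\log L}$.

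\emph{Case (iii).} Here $|K|=|x-y|^{-n}$. The gradient of $|z|^{-n-ir}$ is bounded by $|n+ir|\,|z|^{-n-1}$, so (1.3) and (1.4) hold with $\delta=1$. The operator $T_{\Omega,K}$ is a convolution with multiplier
$$m(\xi)=\int_{\mathbb{S}^{n-1}}\Omega(\theta)\int_0^\infty e^{-i\rho\langle\theta,\xi\rangle}\rho^{-1-ir}\,d\rho\,d\sigma(\theta).$$
When $\langle\theta,\xi\rangle\neq0$, the inner integral equals $\Gamma(-ir)\,(i\langle\theta,\xi\rangle)^{ir}$, understood by analytic continuation or as a limit, using (1.1) to justify the p.v. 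Its modulus is at most $e^{\pi|r|/2}|\Gamma(-ir)|<\infty$ because $r\ne0$. Hence $\|m\|_\infty\lesssim_r\|\Omega\|_{L^1}$, and Plancherel gives the $L^2$ bound.

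I expect the main obstacle to be the $L^2$ input in cases (i) and (ii); in (iii) the only delicate point is justifying the p.v. limit. For (i) we rely on known rough commutator estimates. For (ii) the care lies in the rotation argument: the one-dimensional bounds must be uniform over all lines, and the power-series expansion of $F$ must converge absolutely given the CMM growth bounds.
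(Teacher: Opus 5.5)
Your overall architecture is sound. It differs from the paper's mainly in bookkeeping. The paper takes the $L^p$ bounds, $1<p<\infty$, for $T_{\Omega,K}$ directly from the literature (Pan--Wu--Yang, Calder\'on et al., Muckenhoupt), transfers them to $T_{\Omega,K,a}$ via (1.6), and gets weak type $(1,1)$ from Theorem \ref{thm1.1}; it never checks (1.2)--(1.4). You need only $L^2$ input and recover the full $L^p$ range by Marcinkiewicz interpolation plus a correct adjoint argument: $m_{y,x}a=m_{x,y}a$, and $\overline{\Omega(-\cdot)}$, $\overline{K(y,x)}$, $\bar a$ again satisfy the hypotheses. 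Your verifications of (1.2)--(1.4) in all three cases are correct. So is the multiplier argument in (iii): the $\rho\to0$ truncation produces a $\theta$-independent, non-convergent term, and (1.1) annihilates it, as you indicate.

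The gap is the $L^2$ bound in case (ii). The hypothesis only says that $F$ is real analytic on the interval $[-M,M]$, $M=\|\nabla A\|_\infty$. That gives a holomorphic extension to some, possibly thin, complex neighbourhood of the interval, not to a disc of radius $>M$. The Taylor series $F(t)=\sum_k c_kt^k$ at $0$ can therefore have radius of convergence smaller than $M$. For example, $F(t)=(1+t^2)^{-1}$ is even and real analytic on all of $\mathbb{R}$, but its Taylor series has radius $1$. For $M\ge1$ the series $\sum_k|c_k|\,\|C_k\|_{2\to2}$, with $\|C_k\|_{2\to2}\lesssim(1+k)^4M^k$, then diverges, so the expansion proves nothing.

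The missing idea is to replace the power series by Cauchy's formula. Take a contour $\Gamma$ surrounding $[-M,M]$ inside the domain of holomorphy of $F$, let $a$ be the restriction of $A$ to a line, and write
\[
\frac{1}{u-v}F\Big(\frac{a(u)-a(v)}{u-v}\Big)=\frac{1}{2\pi i}\oint_\Gamma\frac{F(z)\,dz}{(zu-a(u))-(zv-a(v))}.
\]
For $z\in\Gamma$ set $\gamma_z(u)=zu-a(u)$. Then $|\gamma_z(u)-\gamma_z(v)|\ge \mathrm{dist}(\Gamma,[-M,M])\,|u-v|$, and $\mathrm{dist}(\Gamma,[-M,M])\le|\gamma_z'|\le|z|+M$. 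So $\gamma_z$ parametrizes a chord-arc curve with constants uniform in $z\in\Gamma$. The operator with kernel $1/(\gamma_z(u)-\gamma_z(v))$ is the Cauchy integral on that curve composed with multiplication by $1/\gamma_z'$. It is therefore bounded on $L^2(\mathbb{R})$ uniformly in $z$, by Coifman--McIntosh--Meyer and David. Integrating over $\Gamma$ gives the uniform one-dimensional bound, and your rotation argument then goes through. Alternatively, simply cite Calder\'on et al.\ for this step, as the paper does.
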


\begin{remark}\label{rem1.2}
\begin{enumerate}[{\rm (i)}]
\item Theorem \ref{thm1.1} implies Theorem A, which corresponds to the
case $K(x,y)=\frac{1}{|x-y|^n}$. Actually, when $K(x,y)=\frac{1}{|x-y|^n}$,
the operator $T_{\Omega,K}$ is just the classical rough singular integral
operator $T_\Omega$. A celebrated result of Calder\'{o}n and Zygmund
\cite{CZ} states that $T_{\Omega}$ is bounded on $L^p(\mathbb{R}^n)$ for
$1<p<\infty$ if $\Omega\in L\log L(\mathbb{S}^{n-1})$ satisfies (1.1).
These facts together with Theorem \ref{thm1.1} and Remark \ref{rem1.1}
yield Theorem A.
\item Corollary \ref{cor1.1} follows from Theorem \ref{thm1.1}, Remark
\ref{rem1.1} and the following facts:
\begin{enumerate}[{\rm (a)}]
\item Let $K(x,y)=\frac{A(x)-A(y)}{|x-y|^{n+1}}$, where $\nabla A\in L^\infty
(\mathbb{R}^n)$. Pan et al. \cite{PWY} proved that $T_{\Omega,K}$ is
bounded on $L^p(\mathbb{R}^n)$ for $1<p<\infty$ if
$\Omega\in H^1(\mathbb{S}^{n-1})$. This together with the fact that
$L\log L(\mathbb{S}^{n-1})\subsetneq H^1(\mathbb{S}^{n-1})$ implies
the $L^p\,(1<p<\infty)$ boundedness for $T_{\Omega,A}$ under the condition
that $\Omega\in L\log L(\mathbb{S}^{n-1})$.
\item Let $K(x,y)=\frac{1}{|x-y|^n}F\big(\frac{A(x)-A(y)}{|x-y|}\big)$,
where $F,\,A$ are as in Corollary \ref{cor1.1}. In \cite{CCFJR},
Calder\'{o}n et al. concluded that $T_{\Omega,K}$ is bounded on
$L^p(\mathbb{R}^n)$ for $1<p<\infty$ if $\Omega\in L^1(\mathbb{S}^{n-1})$
is an odd function. It follows that $T_{\Omega,K}$ is bounded on
$L^p(\mathbb{R}^n)$ for $1<p<\infty$ when $\Omega\in L\log L(\mathbb{S}^{n-1})$.
\item Let $K(x,y)=\frac{1}{|x-y|^{n+ir}}$, where $r\in\mathbb{R}\setminus\{0\}$.
In \cite{Mu}, Muckenhoupt proved that $T_{\Omega,ir}$ is bounded on
$L^p(\mathbb{R}^n)$ for $1<p<\infty$ if $\Omega\in L^1(\mathbb{S}^{n-1})$
satisfies (1.1). It follows that $T_{\Omega,ir}$ is bounded on
$L^p(\mathbb{R}^n)$ for $1<p<\infty$ when $\Omega\in L\log L(\mathbb{S}^{n-1})$.
\end{enumerate}
\item By the fact that $B_q^{0,0}(\mathbb{S}^{n-1})\subsetneq L\log L(\mathbb{S}^{n-1})$,
our main results also hold for the case $\Omega\in\bigcup_{1<q<\infty}B_q^{0,0}(\mathbb{S}^{n-1})$.
\end{enumerate}
\end{remark}

This paper is organized as follows. In Section \ref{S2} we present some
notation and establish some technical lemmas, which are the main
ingredients for the proof of Theorem \ref{thm1.1}. The proof of Theorem
\ref{thm1.1} will be given in Section \ref{S3}. It should be pointed out
that some ideas of our methods are taken from \cite{DL3}, \cite{DL}, but
our methods and techniques are more delicate and complex than those of
\cite{DL3},\cite{DL}.

Throughout this paper, the notation $X\lesssim Y$ means $X\le C Y$ for some
constant $C>0$ that is independent of the essential variables involved in $X$ and $Y$; and $X\simeq Y$ means $X\lesssim Y\lesssim X$. We shall
use the following notation. For each $A\subset\mathbb{R}^n$ we denote
$A^c=\mathbb{R}^n\setminus A$. For $a\in\mathbb{R}$ we denote by $[a]$
the integer part of $a$. For $x=(x_1,\ldots,x_n)\in\mathbb{R}^n$ and
$y=(y_1,\ldots,y_n)\in\mathbb{R}^n$, we denote
$\langle x,y\rangle=\sum_{i=1}^nx_iy_i$. Given $f\in L^p(\mathbb{R}^n)$
for $1\leq p\leq\infty$, we denote $\|f\|_p=\|f\|_{L^p(\mathbb{R}^n)}$.
For $\alpha=(\alpha_1,\ldots,\alpha_n)\in\mathbb{Z}_{+}^n$ and
$x=(x_1,\dots,x_n)\in\mathbb{R}^n$, we write
$$|\alpha|=\alpha_1+\cdots+\alpha_n,\ \ \bigtriangleup_{x}=\partial_{x_1}^2+\cdots+\partial_{x_n}^2,\ \ \bigtriangledown_x=(\partial_{x_1}^1,\ldots,\partial_{x_n}^1),$$
$$\partial^\alpha f(x)=\partial_{x_1}^{\alpha_1}\cdots\partial_{x_n}^{\alpha_n}f(x).$$
We denote $\tilde{f}(x)=f(-x)$ and $\hat{f}$ (resp., $\mathcal{F}^{-1}(f)$)
the (resp., inverse) Fourier transform of $f$, i.e.
$$\mathcal{F}(f)(\xi)=\hat{f}(\xi)=\int_{\mathbb{R}^n}e^{-i\langle x,\xi\rangle}f(x)dx,\ \ \ \ \mathcal{F}^{-1}(f)(x)=\int_{\mathbb{R}^n}e^{i\langle x,
\xi\rangle}f(x)dx.$$
Given two functions $f,\,g$ defined on $\mathbb{R}^n$, the convolution
of $f$ and $g$ is given by
$$f*g(x)=\int_{\mathbb{R}^n}f(x-y)g(y)dy,\ \ \ x\in\mathbb{R}^n.$$
For a dyadic cube $Q$ we use the notation $L(Q)$ to denote the side length
of $Q$.

\bigskip

\section{Preliminary notation and lemmas}\label{S2}

\medskip

\subsection{Calder\'{o}n--Zygmund decomposition of $f W^{\eta,t}$}
We use the notation $\mathcal{M}_{HL}$ to denote the Hardy--Littlewood
maximal operator. The following Calder\'{o}n--Zygmund decomposition was
proved in \cite{DL3}.

\begin{lemma}\label{lem2.1}
{\rm (Lemma 2.1,\cite{DL3})} Let $(\eta,t)\in\mathbb{R}^n\times[0,1]$
and $W^{\eta,t}$ be as in $(1.5)$. Let $f\in L^1(\mathbb{R}^n)$. Set
$$\Omega_\tau=\{x\in\mathbb{R}^n:\mathcal{M}_{HL}f(x)>\tau\}.$$
There exist constants $C>0$ independent of $\eta,\,t$ such that
\begin{enumerate}[{\rm (i)}]
\item $\Omega_\tau=\bigcup_{Q\in\mathcal{B}}Q$, where the
$Q$'s are disjoint dyadic cubes and $\mathcal{B}$ is an 
index set;
\item $|\Omega_\tau|\leq C\tau^{-1}\|f\|_1$;
\item $f W^{\eta,t}=g^{\eta,t}+b^{\eta,t}$, where $\|g^{\eta,
t}\|_2^2\leq C\tau\|f\|_1$ and $\|b^{\eta,t}\|_1\leq C\|f\|_1$;
\item $b^{\eta,t}=\sum_{Q\in\mathcal{B}}b_Q^{\eta,t}$,
${\rm supp}(b_Q^{\eta,t})\subset Q,\ \int_{Q}b_Q^{\eta,t}(y)dy=0$,\ \
$\|b_Q^{\eta,t}\|_1\leq C\tau|Q|$.
\end{enumerate}
\end{lemma}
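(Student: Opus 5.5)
The plan is to run the classical Calder\'{o}n--Zygmund decomposition on the level set of $\mathcal{M}_{HL}f$, applied to the function $fW^{\eta,t}$. The only extra input is the pointwise identity $|W^{\eta,t}(y)|=1$, which makes every constant independent of $(\eta,t)$.

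For (i) and (ii), fix $\tau>0$. The weak type $(1,1)$ bound for $\mathcal{M}_{HL}$ gives $|\Omega_\tau|\leq C\tau^{-1}\|f\|_1$, which is (ii). In particular $\Omega_\tau$ is a proper open subset of $\mathbb{R}^n$, since it has finite measure and is open by lower semicontinuity of $\mathcal{M}_{HL}f$. I would take a Whitney decomposition of $\Omega_\tau$ into half-open dyadic cubes $Q\in\mathcal{B}$. These are pairwise disjoint with union $\Omega_\tau$, which is (i), and each satisfies $\operatorname{diam}Q\leq\operatorname{dist}(Q,\Omega_\tau^c)\leq 4\operatorname{diam}Q$. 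Hence a fixed dilate $Q^*=c_nQ$ contains a point $z\notin\Omega_\tau$, and so
$$\frac{1}{|Q|}\int_Q|f|\leq\frac{c_n^n}{|Q^*|}\int_{Q^*}|f|\leq c_n^n\,\mathcal{M}_{HL}f(z)\leq C\tau .$$

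For (iii) and (iv), set
$$b_Q^{\eta,t}=\Big(fW^{\eta,t}-\frac{1}{|Q|}\int_Q fW^{\eta,t}\Big)\chi_Q,\qquad b^{\eta,t}=\sum_{Q\in\mathcal{B}}b_Q^{\eta,t},\qquad g^{\eta,t}=fW^{\eta,t}-b^{\eta,t}.$$
Then $\operatorname{supp}b_Q^{\eta,t}\subset Q$ and $\int_Q b_Q^{\eta,t}=0$. Since $|W^{\eta,t}|=1$,
$$\|b_Q^{\eta,t}\|_1\leq 2\int_Q|f|\leq C\tau|Q|,$$
and by disjointness of the cubes, $\|b^{\eta,t}\|_1\leq 2\|f\|_1$.

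For $g^{\eta,t}$, I would bound it in $L^\infty$ and $L^1$ separately:
\begin{itemize}
\item On $\Omega_\tau^c$ we have $g^{\eta,t}=fW^{\eta,t}$, so by the Lebesgue differentiation theorem $|g^{\eta,t}|=|f|\leq\mathcal{M}_{HL}f\leq\tau$ a.e. there.
\item On each $Q\in\mathcal{B}$, $g^{\eta,t}$ is the constant $|Q|^{-1}\int_Q fW^{\eta,t}$, whose modulus is at most $|Q|^{-1}\int_Q|f|\leq C\tau$.
\end{itemize}
Thus $\|g^{\eta,t}\|_\infty\leq C\tau$ and $\|g^{\eta,t}\|_1\leq\|f\|_1$, and interpolating gives $\|g^{\eta,t}\|_2^2\leq\|g^{\eta,t}\|_\infty\|g^{\eta,t}\|_1\leq C\tau\|f\|_1$, which is (iii).

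No step is genuinely hard. The only point needing care is the choice of cubes: they must be exactly disjoint dyadic cubes whose union is $\Omega_\tau$, and they must have averages controlled by $\tau$. This is why I would use half-open Whitney cubes rather than the usual stopping-time cubes, which only cover $\Omega_\tau$ up to a null set and are not tied to $\mathcal{M}_{HL}$ at height $\tau$. Uniformity in $(\eta,t)$ then follows automatically, because $W^{\eta,t}$ enters only through $|W^{\eta,t}|=1$.
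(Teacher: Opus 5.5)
Your argument is correct. It is the standard Stein-type Calder\'{o}n--Zygmund decomposition: you take Whitney cubes of the open set $\Omega_\tau=\{\mathcal{M}_{HL}f>\tau\}$, set $g^{\eta,t}$ equal to $fW^{\eta,t}$ off $\Omega_\tau$ and to the averages of $fW^{\eta,t}$ on the cubes, and use only $|W^{\eta,t}|=1$. This is the argument behind Lemma 2.1 of \cite{DL3}; the paper itself gives no proof and simply cites that lemma.

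One further point in your favour: in your construction the cubes depend only on $f$ and $\tau$, not on $(\eta,t)$. The later argument relies on this, for instance for the exceptional set $E=\bigcup_{Q\in\mathcal{B}}200Q$ in Step 2.
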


In what follows, let $f\in L^1(\mathbb{R}^n),\,b^{\eta,t}$
be as in Lemma \ref{lem2.1}. Set
$$B_j^{\eta,t}=\sum_{L(Q)=2^j} b_Q^{\eta,t},\ \ \ \ j\in\mathbb{Z}.\eqno(2.1)$$
Then we have $b^{\eta,t}=\sum_{j\in\mathbb{Z}}B_j^{\eta,t}$. Invoking
Lemma \ref{lem2.1}, we see that
$$\sum\limits_{j\in\mathbb{Z}}\sum\limits_{L(Q)=2^{j-s}}\|b_Q^{\eta,t}\|_1=\sum_{j\in\mathbb{Z}}\|B_j^{\eta,t}\|_1\leq\|b^{\eta,t}\|_1\leq\|f\|_1.\eqno(2.2)$$

\subsection{Some technical lemmas}Throughout this subsection, we assume
$a\in L^1(\mathbb{R}^n)$ and $\hat{a}\in L^1(\mathbb{R}^n)$. Let
$f,\,b_Q^{\eta,t}$ be as in Lemma \ref{lem2.1} and $B_{j}^{\eta,t}$
be defined in (2.1). Let $\beta$ be a nonnegative radial
$C_c^\infty(\mathbb{R}^n)$ function such that ${\rm supp}(\beta)\subset\{x\in
\mathbb{R}^n:1/2\leq|x|\leq2\}$ and $\sum_{j\in\mathbb{Z}}\beta_j(x)=1$
for all $x\in\mathbb{R}^n\setminus\{0\}$, where $\beta_j(x)=\beta(2^{-j}x)$.
Let $K$ be as in Theorem \ref{thm1.1} and set
$$K_j(x,y)=\beta_j(x-y)K(x,y).\eqno(2.3)$$
Let $\vartheta$ be a nonnegative radial $C_c^\infty(\mathbb{R}^n)$
function and satisfy ${\rm supp}(\vartheta)\subset\{x\in\mathbb{R}^n:
|x|\leq1\}$ and $\int_{\mathbb{R}^n}\vartheta(x)dx=1$. Let $s\geq1$
and $l_\delta(s)=[2\delta^{-1}\log_2 s]+2$ with $\delta\in(0,1)$. Set
$\vartheta_i(x)=2^{-in}\vartheta(2^{-i}x)$. Define a kernel $K_j^{s,
\delta}$ by
$$K_j^{s,\delta}(x,y)=\int_{\mathbb{R}^n}\vartheta_{j-l_\delta(s)}(x-z)K_j(z,y)dz.\eqno(2.4)$$

For $(\eta,t)\in\mathbb{R}^n\times[0,1]$ and $u^{\eta,t}:\mathbb{R}^n
\rightarrow\mathbb{R}$ we define the following operator
$$\Theta_{\Omega,K,a}u^{\eta,t}(x)=\iint_{[0,1]\times\mathbb{R}^n}a^{x,t}(\eta)T_{\Omega,K}u^{\eta,t}(x)d\eta dt.\eqno(2.5)$$

\begin{lemma}\label{lem2.2}
Let $\Omega\in L^1(\mathbb{S}^{n-1})$, $f\in L^1(\mathbb{R}^n)$ and
$B_{j}^{\eta,t}$ be as in $(2.1)$. Then, for any $s\geq1$ and
$\delta\in(0,1)$,
$$\max\Big\{\Big\|\sum\limits_{j\in\mathbb{Z}}\Theta_{\Omega,K_j,a}B_{j-s}^{\eta,t}\Big\|_1,\Big\|\sum\limits_{j\in\mathbb{Z}}\Theta_{\Omega,K_j^{s,\delta},a}B_{j-s}^{\eta,t}\Big\|_1\Big\}
\lesssim\|\hat{a}\|_1\|\Omega\|_{L^1(\mathbb{S}^{n-1})}\|f\|_1,$$
$$\Big\|\sum\limits_{j\in\mathbb{Z}}(\Theta_{\Omega,K_j,a}B_{j-s}^{\eta,t}-\Theta_{\Omega,K_j^{s,\delta},a}B_{j-s}^{\eta,t})\Big\|_1\lesssim s^{-2}\|\hat{a}\|_1\|\Omega\|_{L^1(\mathbb{S}^{n-1})}\|f\|_1.$$
\end{lemma}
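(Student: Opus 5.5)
The plan is to move the $L^1$ norm inside the $(\eta,t)$-integral in (2.5) and then estimate each dyadic piece cube by cube. Since $|a^{x,t}(\eta)|=(2\pi)^{-n}|\hat a(\eta)|$ does not depend on $x$, Minkowski's inequality gives
$$\Big\|\sum_{j}\Theta_{\Omega,K_j,a}B_{j-s}^{\eta,t}\Big\|_1\le (2\pi)^{-n}\iint_{[0,1]\times\mathbb{R}^n}|\hat a(\eta)|\sum_{j}\big\|T_{\Omega,K_j}B^{\eta,t}_{j-s}\big\|_1\,d\eta\,dt .$$
The same bound holds with $K_j$ replaced by $K_j^{s,\delta}$, and for the difference. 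It therefore suffices to prove, uniformly in $(\eta,t)$,
$$\sum_j\|T_{\Omega,K_j}B_{j-s}^{\eta,t}\|_1\lesssim\|\Omega\|_{L^1(\mathbb{S}^{n-1})}\|f\|_1,\qquad \sum_j\|T_{\Omega,K_j-K_j^{s,\delta}}B_{j-s}^{\eta,t}\|_1\lesssim s^{-2}\|\Omega\|_{L^1(\mathbb{S}^{n-1})}\|f\|_1 .$$
By (2.2), both reduce further to kernel estimates for a single cube $Q$ with $L(Q)=2^{j-s}$.

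\textbf{First bound.} Apply Fubini:
$$\|T_{\Omega,K_j}b_Q\|_1\le\int|b_Q(y)|\int|\Omega(x-y)||K_j(x,y)|\,dx\,dy .$$
By (1.2) and the support of $\beta_j$, the inner integral is at most
$$C\int_{2^{j-1}\le|x-y|\le 2^{j+1}}|\Omega(x-y)||x-y|^{-n}\,dx\lesssim\|\Omega\|_{L^1(\mathbb{S}^{n-1})}$$
in polar coordinates. For $K_j^{s,\delta}$, write $T_{\Omega,K_j^{s,\delta}}$ explicitly. The $\Omega$ factor stays at $x-y$ while $K_j$ is evaluated at $(z,y)$ with $|x-z|\le 2^{j-l_\delta(s)}\le 2^{j-2}$. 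So $|K_j^{s,\delta}(x,y)|\lesssim 2^{-jn}\chi_{\{2^{j-2}\le|x-y|\le2^{j+2}\}}$, because $\vartheta_i$ has integral one, and the same polar-coordinate bound follows. Summing over $Q$ and $j$ with (2.2) gives the first inequality. Neither bound uses the cancellation of $b_Q$.

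\textbf{Second bound.} Here the gain $s^{-2}$ has to come from smoothness of $K$ in the first variable:
$$K_j(x,y)-K_j^{s,\delta}(x,y)=\int\vartheta_{j-l_\delta(s)}(x-z)\big(K_j(x,y)-K_j(z,y)\big)\,dz .$$
We have $|x-z|\le 2^{j-l}$ with $l=l_\delta(s)$, and $|x-y|\sim 2^j\ge 2|x-z|$. Combining (1.3) with the smoothness of $\beta_j$, which contributes $|x-z|2^{-j}\cdot 2^{-jn}$, gives
$$|K_j(x,y)-K_j(z,y)|\lesssim 2^{-jn}\big(2^{-l\delta}+2^{-l}\big)\lesssim 2^{-jn}2^{-l\delta}.$$
Since $l\ge 2\delta^{-1}\log_2 s$, we get $2^{-l\delta}\le s^{-2}$. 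Hence $|K_j-K_j^{s,\delta}|\lesssim s^{-2}2^{-jn}$ on $|x-y|\sim2^j$, and the Fubini/polar argument of the first bound gives the $s^{-2}$ factor. Again no cancellation of $b_Q$ is needed, which is consistent with the bound holding for any $\Omega\in L^1$.

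\textbf{Main obstacle.} The delicate point is the support and comparability bookkeeping. One must check that $|x-y|>2|x-z|$, so that (1.3) applies, and that $|z-y|\sim|x-y|\sim2^j$, so that the bounds $|K(z,y)|\lesssim 2^{-jn}$ and $|\beta_j(x-y)-\beta_j(z-y)|$ can be controlled with $\Omega(x-y)$ kept fixed. This works because $l_\delta(s)\ge2$ makes $|x-z|\le2^{j-2}$. Everything else is routine.
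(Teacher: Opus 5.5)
Your proposal is correct and follows the same route as the paper. You use Minkowski in $(\eta,t)$ to pull out $\|\hat a\|_1$, prove single-scale $L^1$ kernel bounds by Fubini and polar coordinates, and sum with (2.2), never using the cancellation of $b_Q$. The differences are minor. You prove the estimate $\|T_{\Omega,K_j}B_{j-s}^{\eta,t}-T_{\Omega,K_j^{s,\delta}}B_{j-s}^{\eta,t}\|_1\lesssim s^{-2}\|\Omega\|_{L^1(\mathbb{S}^{n-1})}\|B_{j-s}^{\eta,t}\|_1$ directly from (1.3) and the smoothness of $\beta_j$, whereas the paper imports it from the proof of Lemma 2.1 in \cite{DL}. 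You also bound $K_j^{s,\delta}$ pointwise, where the paper gets that case from the other two by the triangle inequality. Like the paper, your argument implicitly takes the $\delta$ in $l_\delta(s)$ to be at most the H\"older exponent in (1.3).
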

\begin{proof}
Fix $s\geq1$ and $\delta\in(0,1)$. Observe that
$$|T_{\Omega,K_j}B_{j-s}^{\eta,t}(x)|\leq\int_{\mathbb{R}^n}|\Omega(x-y)K_j(x,y)B_{j-s}^{\eta,t}(y)|dy\lesssim W_{j,\Omega}*|B_{j-s}^{\eta,t}|(x),$$
where $W_{j,\Omega}(x)=2^{-jn}|\Omega(x)|\chi_{[2^{j-2},2^{j+2}]}(|x|)$.
Since $\|W_{j,\Omega}\|_1\lesssim\|\Omega\|_{L^1(\mathbb{S}^{n-1})}$, Fubini's theorem and (2.2) yield,
$$\begin{array}{ll}
&\displaystyle\Big\|\sum\limits_{j\in\mathbb{Z}}\Theta_{\Omega,K_j,a}B_{j-s}^{\eta,t}\|_1
\lesssim\displaystyle\int_{\mathbb{R}^n}\iint_{[0,1]\times\mathbb{R}^n}|a^{x,t}(\eta)|\sum\limits_{j\in\mathbb{Z}}W_{j,\Omega}*|B_{j-s}^{\eta,t}|(x)d\eta dtdx\\
&\qquad\qquad\qquad\qquad\lesssim\displaystyle\iint_{[0,1]\times\mathbb{R}^n}|\hat{a}(\eta)|\sum\limits_{j\in\mathbb{Z}}\int_{\mathbb{R}^n}W_{j,\Omega}*|B_{j-s}^{\eta,t}|(x)dxd\eta dt\\
&\qquad\qquad\qquad\qquad\lesssim\displaystyle\|\Omega\|_{L^1(\mathbb{S}^{n-1})}\iint_{[0,1]\times\mathbb{R}^n}|\hat{a}(\eta)|\sum\limits_{j\in\mathbb{Z}}\|B_{j-s}^{\eta,t}\|_1d\eta dt\\
&\qquad\qquad\qquad\qquad\lesssim\displaystyle\|\hat{a}\|_1\|\Omega\|_{L^1(\mathbb{S}^{n-1})}\|f\|_1.
\end{array}$$
On the other hand, it was shown in the proof of Lemma 2.1 in \cite{DL} that
$$\|T_{\Omega,K_j}B_{j-s}^{\eta,t}-T_{\Omega,K_j^{s,\delta}}B_{j-s}^{\eta,t}\|_1\lesssim s^{-2}\|\Omega\|_{L^1(\mathbb{S}^{n-1})}\|B_{j-s}^{\eta,t}\|_1.$$
This together with (2.2) and Fubini's theorem implies that
$$\begin{array}{ll}
&\quad\displaystyle\Big\|\sum\limits_{j\in\mathbb{Z}}(\Theta_{\Omega,K_j,a}B_{j-s}^{\eta,t}-\Theta_{\Omega,K_j^{s,\delta},a}B_{j-s}^{\eta,t})\Big\|_1\\
&\lesssim\displaystyle\int_{\mathbb{R}^n}\iint_{[0,1]\times\mathbb{R}^n}|a^{x,t}(\eta)|
\sum\limits_{j\in\mathbb{Z}}|T_{\Omega,K_j}u^{\eta,t}(x)-T_{\Omega,K_j^{s,\delta}}u^{\eta,t}(x)|d\eta dtdx\\
&\lesssim\displaystyle\iint_{[0,1]\times\mathbb{R}^n}|\hat{a}(\eta)|\sum\limits_{j\in\mathbb{Z}}\|T_{\Omega,K_j}B_{j-s}^{\eta,t}-T_{\Omega,K_j^{s,\delta}}B_{j-s}^{\eta,t}\|_1d\eta dt\\
&\lesssim\displaystyle s^{-2}\|\Omega\|_{L^1(\mathbb{S}^{n-1})}\iint_{[0,1]\times\mathbb{R}^n}|\hat{a}(\eta)|\sum\limits_{j\in\mathbb{Z}}\|B_{j-s}^{\eta,t}\|_1d\eta dt\\
&\lesssim s^{-2}\|\hat{a}\|_1\|\Omega\|_{L^1(\mathbb{S}^{n-1})}\|f\|_1.
\end{array}$$
Hence,
$$\Big\|\sum\limits_{j\in\mathbb{Z}}\Theta_{\Omega,K_j^{s,\delta},a}B_{j-s}^{\eta,t}\Big\|_1\lesssim\|\hat{a}\|_1\|\Omega\|_{L^1(\mathbb{S}^{n-1})}\|f\|_1.$$
Then Lemma \ref{lem2.2} is proved.
\end{proof}

Let $\phi$ be a nonnegative radial ${C}_0^\infty(\mathbb{R}^n)$
function such that ${\rm supp}(\phi)\subset\{x\in\mathbb{R}^n:|x|\leq1/4\}$
and $\int_{\mathbb{R}^n}\phi(x)dx=1$. For $r\in\mathbb{R}$ we set
$\phi_r(x)=2^{-rn}\phi(2^{-r}x)$ and define the convolution operator
$\Phi_r$ by
$$\Phi_rf(x)=\phi_r*f(x).\eqno(2.6)$$
For an operator $U$, $(\eta,t)\in\mathbb{R}^n\times[0,1]$ and $u^{\eta,t}:
\mathbb{R}^n\rightarrow\mathbb{R}$ we define the following operator
associated to $U$ by
$$\Gamma_{\Omega,K,a}^{U}u^{\eta,t}(x)=\iint_{[0,1]\times\mathbb{R}^n}a^{x,t}(\eta)UT_{\Omega,K}u^{\eta,t}(x)d\eta dt.\eqno(2.7)$$

\begin{lemma}\label{lem2.3}
Let $s\geq1$, $\delta\in(0,1)$, $\gamma\in(0,1)$ and
$\Omega\in L^1(\mathbb{S}^{n-1})$. Let $\Phi_r$ be as in $(2.6)$ and
$\Gamma_{\Omega,K,a}^{\Phi_r}$ be as in $(2.7)$. Let $b_Q^{\eta,t}$
be as in Lemma \ref{lem2.1} with $L(Q)=2^{j-s}$. Then we have that for any
$f\in L^1(\mathbb{R}^n)$,
$$\Big\|\sum\limits_{j\in\mathbb{Z}}\sum\limits_{L(Q)=2^{j-s}}\Gamma_{\Omega,K_j^{s,\delta},a}^{\Phi_{j-s\gamma}}b_Q^{\eta,t}\Big\|_1
\lesssim (2^{-s(1-\gamma)}+s^{2\delta^{-1}}2^{-s\delta})\|\hat{a}\|_1
\|\Omega\|_{L^1(\mathbb{S}^{n-1})}\|f\|_1.\eqno(2.8)$$
\end{lemma}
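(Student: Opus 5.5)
The plan is to reduce \eqref{2.8} to a single-cube estimate and exploit the cancellation of $b_Q^{\eta,t}$ together with the smoothness of the kernel in the $y$-variable.

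\emph{Reduction to one cube.} Since $|a^{x,t}(\eta)|=(2\pi)^{-n}|\hat a(\eta)|$ does not depend on $x$, Fubini's theorem gives
$$\Big\|\sum_{j}\sum_{L(Q)=2^{j-s}}\Gamma_{\Omega,K_j^{s,\delta},a}^{\Phi_{j-s\gamma}}b_Q^{\eta,t}\Big\|_1\lesssim\iint_{[0,1]\times\mathbb{R}^n}|\hat a(\eta)|\sum_j\sum_{L(Q)=2^{j-s}}\big\|\Phi_{j-s\gamma}T_{\Omega,K_j^{s,\delta}}b_Q^{\eta,t}\big\|_1\,d\eta\, dt .$$
By (2.2) it therefore suffices to prove, for each fixed $(\eta,t)$, each $j$ and each cube $Q$ with $L(Q)=2^{j-s}$, that
$$\big\|\Phi_{j-s\gamma}T_{\Omega,K_j^{s,\delta}}b_Q^{\eta,t}\big\|_1\lesssim \big(2^{-s(1-\gamma)}+s^{2\delta^{-1}}2^{-s\delta}\big)\|\Omega\|_{L^1(\mathbb{S}^{n-1})}\|b_Q^{\eta,t}\|_1 .$$

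\emph{Kernel representation and cancellation.} Write $\Phi_{j-s\gamma}T_{\Omega,K_j^{s,\delta}}b_Q(x)=\int L_j(x,y)b_Q(y)\,dy$, where, after the substitution $w=z-y$,
$$L_j(x,y)=\int_{\mathbb{R}^n}\phi_{j-s\gamma}(x-y-w)\,\Omega(w)\,K_j^{s,\delta}(y+w,y)\,dw .$$
The point of this substitution is that the rough factor $\Omega(w)$ no longer depends on $y$. Let $y_Q$ be the center of $Q$. Since $\int_Q b_Q=0$, we may replace $L_j(x,y)$ by $L_j(x,y)-L_j(x,y_Q)$, so it suffices to show
$$\sup_{y\in Q}\int_{\mathbb{R}^n}|L_j(x,y)-L_j(x,y_Q)|\,dx\lesssim\big(2^{-s(1-\gamma)}+s^{2\delta^{-1}}2^{-s\delta}\big)\|\Omega\|_{L^1(\mathbb{S}^{n-1})} .$$

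\emph{Splitting the difference.} Set $h=y-y_Q$, so $|h|\lesssim 2^{j-s}$, and split $L_j(x,y)-L_j(x,y_Q)=I+II$.
\begin{itemize}
\item[$I$:] the term with $[\phi_{j-s\gamma}(x-y-w)-\phi_{j-s\gamma}(x-y_Q-w)]\,\Omega(w)\,K_j^{s,\delta}(y+w,y)$. By the mean value theorem, the $x$-integral of the bracket is at most $|h|\,2^{-(j-s\gamma)}\lesssim 2^{-s(1-\gamma)}$. The integrand is supported where $|w|\simeq 2^j$, and there $|K_j^{s,\delta}|\lesssim 2^{-jn}$ by (1.2), so $\int|\Omega(w)||K_j^{s,\delta}(y+w,y)|\,dw\lesssim\|\Omega\|_{L^1(\mathbb{S}^{n-1})}$. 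Hence $\int|I|\,dx\lesssim 2^{-s(1-\gamma)}\|\Omega\|_{L^1(\mathbb{S}^{n-1})}$.
\item[$II$:] the term with $\phi_{j-s\gamma}(x-y_Q-w)\,\Omega(w)\,[K_j^{s,\delta}(y+w,y)-K_j^{s,\delta}(y_Q+w,y_Q)]$. Using (2.4) and the change of variables $u\mapsto u+h$ in the first piece, the bracket equals
$$\int\vartheta_{j-l_\delta(s)}(y_Q+w-u)\,\big[K_j(u+h,y+h... )\big]\,du,$$
more precisely $\int\vartheta_{j-l_\delta(s)}(y_Q+w-u)\,[K_j(u+h,y)-K_j(u,y_Q)]\,du$, a joint translation of both arguments of $K_j$ by $h$. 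Since $\vartheta_{j-l_\delta(s)}$ localizes $u$ within $2^{j-l_\delta(s)}\ll 2^j$ of $y_Q+w$, we have $|u-y_Q|\simeq 2^j$. Combining (1.3), (1.4) with the smoothness of $\beta_j$ then gives $|K_j(u+h,y)-K_j(u,y_Q)|\lesssim 2^{-jn}(2^{-s})^{\delta}$. After integrating in $x$ (where $\|\phi_{j-s\gamma}\|_1=1$) and in $w$, this yields $\int|II|\,dx\lesssim 2^{-s\delta}\|\Omega\|_{L^1(\mathbb{S}^{n-1})}$, which is even better than the claimed $s^{2\delta^{-1}}2^{-s\delta}$.
\end{itemize}
If this joint-translation estimate turns out to be awkward, the fallback is to move the $h$-difference onto $\vartheta_{j-l_\delta(s)}$ instead. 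That costs $|h|\,2^{-(j-l_\delta(s))}\lesssim 2^{l_\delta(s)}2^{-s}\lesssim s^{2\delta^{-1}}2^{-s}$, which is at most $s^{2\delta^{-1}}2^{-s\delta}$; this is presumably the origin of the factor $s^{2\delta^{-1}}$ in (2.8). In that route the remaining difference $K_j(u,y)-K_j(u,y_Q)$ is controlled by (1.4) alone.

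\emph{Main obstacle.} I expect the delicate step to be $II$. One must handle the simultaneous dependence on $y$ through both arguments of $K_j^{s,\delta}$ while keeping $\Omega$ untouched, since no regularity of $\Omega$ is available. One must also check that the supports stay in the region $|x-y|\simeq 2^j$ where the Hölder conditions (1.3)–(1.4) apply; this is guaranteed because $2^{j-l_\delta(s)}$ and $2^{j-s}$ are both much smaller than $2^j$.
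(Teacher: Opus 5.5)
Your proof is correct and follows the paper's argument essentially step for step. You reduce to a single cube via Fubini and (2.2), write the kernel with $\Omega(w)$ independent of $y$, use the cancellation of $b_Q^{\eta,t}$ against the center, and split the difference into a $\phi_{j-s\gamma}$-difference (giving $2^{-s(1-\gamma)}$) and a $K_j^{s,\delta}$-difference. For the $K_j^{s,\delta}$-difference, the paper appeals to Lemma 4.5 of \cite{DL} and gets $s^{2\delta^{-1}}2^{-s\delta-jn}$, which is your fallback route. In that fallback, $K_j(u,y)-K_j(u,y_Q)$ in fact also needs the smoothness of $\beta_j$ together with (1.2), not (1.4) alone. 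Your joint-translation route is a small genuine improvement: $\beta_j(u-y_Q)$ is unchanged by the joint shift, so no smoothness of $\beta_j$ is needed there, and (1.3)--(1.4) alone give the sharper bound $2^{-s\delta-jn}$ without the $s^{2\delta^{-1}}$ factor.
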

\begin{proof}
Let $y_0$ be the center of $Q$ and set
$$A(x,y):=\int_{\mathbb{R}^n}\phi_{j-s\gamma}(x-y-t)\Omega(t)K_j^{s,\delta}(y+t,y)dt.$$
By some changes of variables, we obtain
$$\begin{array}{ll}
&\Phi_{j-s\gamma}T_{\Omega,K_j^{s,\delta}}b_Q^{\eta,t}(x)=\displaystyle\int_{\mathbb{R}^n}
\phi_{j-s\gamma}(x-z)\int_{\mathbb{R}^n}\Omega(z-y)K_j^{s,\delta}(z,y)b_Q^{\eta,t}(y)dydz\\
&\qquad\qquad\qquad\qquad=\displaystyle\int_{\mathbb{R}^n}\phi_{j-s\gamma}(x-z)
\int_{\mathbb{R}^n}\Omega(z-y)K_j^{s,\delta}(z,y)dzb_Q^{\eta,t}(y)dy\\
&\qquad\qquad\qquad\qquad=\displaystyle\int_{\mathbb{R}^n}A(x,y)b_Q^{\eta,t}(y)dy.
\end{array}$$
By the fact $\int_{Q}b_Q^{\eta,t}(y)dy=0$, it holds that
$$\Phi_{j-s\gamma}T_{\Omega,K_j^{s,\delta}}b_Q^{\eta,t}(x)=\int_{Q}(A(x,y)-A(x,y_0))b_Q^{\eta,t}(y)dy.\eqno(2.9)$$

Next we prove that
$$\sup\limits_{y\in Q}\|A(\cdot,y)-A(\cdot,y_0)\|_1\lesssim(2^{-s(1-\gamma)}+s^{2\delta^{-1}}2^{-s\delta})\|\Omega\|_{L^1(\mathbb{S}^{n-1})}.\eqno(2.10)$$
Fix $y\in Q$. By some changes of variables and Fubini's theorem,
$$\begin{array}{ll}
&\quad\|A(\cdot,y)-A(\cdot,y_0)\|_1\\
&\leq\displaystyle\int_{\mathbb{S}^{n-1}}\int_0^\infty\int_{\mathbb{R}^n}|\phi_{j-s\gamma}(x-y-r\theta)
-\phi_{j-s\gamma}(x-y_0-r\theta)|dx\\
&\quad\times|K_j^{s,\delta}(y+r\theta,y)|r^{n-1}dr|\Omega(\theta)|d\sigma(\theta)\\
&\quad+\displaystyle\int_{\mathbb{S}^{n-1}}\int_0^\infty\int_{\mathbb{R}^n}|\phi_{j-s\gamma}(x-y_0-r\theta)|
|K_j^{s,\delta}(y+r\theta,y)-K_j^{s,\delta}(y_0+r\theta,y_0)|dx\\
&\quad\times r^{n-1}dr|\Omega(\theta)|d\sigma(\theta).
\end{array}$$
Hence,
$$\begin{array}{ll}
&\displaystyle\int_{\mathbb{S}^{n-1}}\int_0^\infty\int_{\mathbb{R}^n}|\phi_{j-s\gamma}(x-y-r\theta)
-\phi_{j-s\gamma}(x-y_0-r\theta)|dx\\
&\times|K_j^{s,\delta}(y+r\theta,y)|r^{n-1}dr|\Omega(\theta)|d\sigma(\theta)\lesssim 2^{-s(1-\gamma)}\|\nabla\phi\|_1\|\Omega\|_{L^1(\mathbb{S}^{n-1})}.
\end{array}$$
Using an argument analogous to that in Lemma 4.5 of \cite{DL},
$$|K_j^{s,\delta}(y+r\theta,y)-K_j^{s,\delta}(y_0+r\theta,y_0)|\lesssim (s^{2\delta^{-1}}+1)2^{-s-jn}+2^{-s\delta-jn}\lesssim s^{2\delta^{-1}}2^{-s\delta-jn}$$
It follows that
$$\begin{array}{ll}
&\quad\displaystyle\int_{\mathbb{S}^{n-1}}\int_0^\infty\int_{\mathbb{R}^n}|\phi_{j-s\gamma}(x-y_0-r\theta)|
|K_j^{s,\delta}(y+r\theta,y)-K_j^{s,\delta}(y_0+r\theta,y_0)|dx\\
&\quad\times r^{n-1}dr|\Omega(\theta)|d\sigma(\theta)\\
&\leq\displaystyle\int_{\mathbb{S}^{n-1}}\int_{2^{j-2}}^{2^{j+2}}
\int_{\mathbb{R}^n}|\phi_{j-s\gamma}(x-y_0-r\theta)|dxs^{2\delta^{-1}}2^{-s\delta-jn}
r^{n-1}dr|\Omega(\theta)|d\sigma(\theta)\\
&\lesssim s^{2\delta^{-1}}2^{-s\delta}\|\Omega\|_{L^1(\mathbb{S}^{n-1})}.
\end{array}$$
Thus, (2.10) holds. Combining (2.10) with (2.9) yields that
$$\|\Phi_{j-s\gamma}T_{\Omega,K_j^{s,\delta}}b_Q^{\eta,t}\|_1\lesssim
(2^{-s(1-\gamma)}+s^{2\delta^{-1}}2^{-s\delta})\|\Omega\|_{L^1(\mathbb{S}^{n-1})}\|b_Q^{\eta,t}\|_1.$$
This together with Fubini's theorem and (2.2) implies that
$$\begin{array}{ll}
&\quad\displaystyle\Big\|\sum\limits_{j\in\mathbb{Z}}\sum\limits_{L(Q)=2^{j-s}}\Gamma_{\Omega,K_j^{s,\delta},a}^{\Phi_{j-s\gamma}}b_Q^{\eta,t}\Big\|_1\\
&\lesssim\displaystyle \iint_{[0,1]\times\mathbb{R}^n}|a^{x,t}(\eta)|\sum\limits_{j\in\mathbb{Z}}\sum\limits_{L(Q)=2^{j-s}}\|\Phi_{j-s\gamma}T_{\Omega,K_j^{s,\delta}}b_Q^{\eta,t}\|_1d\eta dt\\
&\lesssim\displaystyle(2^{-s(1-\gamma)}+s^{2\delta^{-1}}2^{-s\delta})\|\Omega\|_{L^1(\mathbb{S}^{n-1})}
\iint_{[0,1]\times\mathbb{R}^n}|\hat{a}(\eta)|\sum\limits_{j\in\mathbb{Z}}\sum\limits_{L(Q)=2^{j-s}}\|b_Q^{\eta,t}\|_1d\eta dt\\
&\lesssim (2^{-s(1-\gamma)}+s^{2\delta^{-1}}2^{-s\delta})\|\hat{a}\|_1
\|\Omega\|_{L^1(\mathbb{S}^{n-1})}\|f\|_1.
\end{array}$$
This proves (2.8).
\end{proof}

We now introduce some notation, which follows from \cite{Se}.
For $s\geq1$ let $\mathfrak{E}^s=\{e_v^s\}$ be a collection
of unit vectors with mutual distance $>2^{-s-10}n^{-1}$ such
that for each $\theta\in\mathbb{S}^{n-1}$ there is an $e_v^s$
with $|\theta-e_v^s|\leq2^{-s-1}$. It was pointed out in
\cite{Se} that there exist disjoint measurable sets
$E_v^s\subset\mathbb{S}^{n-1}$ with $e_v^s \in E_v^s$,
$\text{diam}(E_v^s)\leq2^{-s}$ and $\bigcup_{v}E_v^s=\mathbb{S}^{n-1}$.
Then $\text{card}(\mathfrak{E}^s)\approx 2^{s(n-1)}$.

Let $\Psi$ be a $C_0^\infty(\mathbb{R})$ function such that
$0\leq\Psi(t)\leq1$, $\supp(\Psi)\subset[-4,4]$ and $\Psi(t)=1$
for $t\in[-2,2]$. Define the following multiplier operators by
$$\widehat{P_{s,v}h}(\xi)=\Psi(2^{\frac{s}{2}}\langle|\xi|^{-1}\xi,e_v^s\rangle)\hat{h}(\xi),$$
$$\widehat{Q_{s,v}h}(\xi)=(1-\Psi(2^{\frac{s}{2}}\langle|\xi|^{-1}\xi,e_v^s\rangle))\hat{h}(\xi),$$
where $h$ is a Schwartz function.

\begin{lemma}\label{lem2.4}
Let $\rho>0$, $s\geq1$, $\delta\in(0,1)$, $\beta>0$ and
$\Omega_v\in L^1(\mathbb{S}^{n-1})$ and
${\rm supp}(\Omega_v)\subset E_v^s\cap\{\theta\in\mathbb{S}^{n-1}:
|\Omega(\theta)|\leq\rho\}$. Let $\Lambda$ be a collection of
dyadic cubes $Q$ with disjoint interiors. For $Q\in\Lambda$ let
$h_Q^{\eta,t}$ be an integrable function supported in $Q$ satisfying
$$\frac{1}{|Q|}\int_{\mathbb{R}^n}|h_Q^{\eta,t}(x)|dx\leq\alpha,$$
for some $\alpha>0$. Set $H_j^{\eta,t}=\sum_{Q\in\Lambda,\ L(Q)=2^{j}}
h_Q^{\eta,t}$ and assume
$$\sup\limits_{[\eta,t]\in\mathbb{R}^n\times[0,1]}\sum\limits_{j\in\mathbb{Z}}\|H_j^{\eta,t}\|_1\leq\beta$$
for some $\beta>0$. Then for any function $\tau$ defined on $\mathbb{Z}$,
$$\Big\|\sum\limits_{v}\sum\limits_{j\in\mathbb{Z}}\Gamma_{\Omega_v,K_j^{s,\delta},a}^{P_{v,s}(I-\Phi_{\tau(j)})}H_{j-s}^{\eta,t}\Big\|_2
\lesssim2^{-s/4}\rho\alpha^{1/2}\|\hat{a}\|_1\beta^{1/2}.\eqno(2.11)$$
\end{lemma}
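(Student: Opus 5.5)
We first remove the symbol $a$ with Minkowski's inequality in $(\eta,t)$, and then prove the resulting fixed-$(\eta,t)$ $L^2$ bound by adapting Seeger's $L^2$ argument for the directional pieces. Since $|a^{x,t}(\eta)|=(2\pi)^{-n}|\hat a(\eta)|$ does not depend on $x$, we have
$$\Big\|\sum_v\sum_j\Gamma^{P_{v,s}(I-\Phi_{\tau(j)})}_{\Omega_v,K_j^{s,\delta},a}H_{j-s}^{\eta,t}\Big\|_2\le \iint_{[0,1]\times\mathbb{R}^n}|\hat a(\eta)|\,\Big\|\sum_v\sum_j P_{s,v}(I-\Phi_{\tau(j)})T_{\Omega_v,K_j^{s,\delta}}H_{j-s}^{\eta,t}\Big\|_2\,d\eta\,dt .$$
Hence (2.11) follows once we show that, for each fixed $(\eta,t)$,
$$\Big\|\sum_v\sum_j P_{s,v}(I-\Phi_{\tau(j)})T_{\Omega_v,K_j^{s,\delta}}H_{j-s}^{\eta,t}\Big\|_2\lesssim 2^{-s/4}\rho\,\alpha^{1/2}\Big(\sum_j\|H_{j-s}^{\eta,t}\|_1\Big)^{1/2}.$$
The right-hand side is at most $2^{-s/4}\rho\alpha^{1/2}\beta^{1/2}$ uniformly in $(\eta,t)$. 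The modulation $W^{\eta,t}$ is already built into $H^{\eta,t}$, and only the size hypotheses $\|h_Q\|_1\le\alpha|Q|$ and $\sum_j\|H_j\|_1\le\beta$ are used. So from here on the problem is the $a$-free estimate of Lemma 4.x in \cite{DL}, which is a variable-kernel version of Seeger's estimate in \cite{Se}.

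The fixed-$(\eta,t)$ bound is proved in three steps.

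\emph{Almost-orthogonality in $v$.} The multipliers $\Psi(2^{s/2}\langle\xi/|\xi|,e_v^s\rangle)$ have bounded overlap at most $C2^{s(n-2)/2}$ in $v$. More usefully, for unit vectors $\xi/|\xi|$ the number of $v$ with $|\langle \xi/|\xi|,e_v\rangle|\le 4\cdot2^{-s/2}$ is about $2^{s(n-1)}\cdot 2^{-s/2}$. Rather than using this count directly, I would follow Seeger and write the square of the norm as $\sum_{v,v'}\langle\cdot,\cdot\rangle$. We then bound it via Cauchy--Schwarz in $v$ by
$$\sum_v\Big\|P_{s,v}\sum_j(\cdots)\Big\|_2^2\times(\text{overlap count}).$$
To avoid losing this factor, the better route is to test against $g\in L^2$ and use $\sum_v|\widehat{P_{s,v}g}|^2\lesssim 2^{s(n-2)/2}|\hat g|^2$. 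This loss will be compensated by the support size of $\Omega_v$: since $|\Omega_v|\le\rho$ and $\operatorname{diam}E_v^s\le2^{-s}$, we get $\|\Omega_v\|_1\lesssim\rho\, 2^{-s(n-1)}$.

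\emph{Kernel geometry.} The kernel of $T_{\Omega_v,K_j^{s,\delta}}$ is supported where $x-y\in 2^{j}$-annulus with direction in $E_v^s$, i.e.\ a $2^j\times(2^{j-s})^{n-1}$ plate pointing along $e_v$. Here the smoothing $\vartheta_{j-l_\delta(s)}$ in $K_j^{s,\delta}$ makes it smooth at scale $2^{j-l_\delta(s)}$ in $x$. For each $v$ we expand
$$\Big\|\sum_j P_{s,v}(I-\Phi_{\tau(j)})T_{\Omega_v,K_j^{s,\delta}}H_{j-s}\Big\|_2^2$$
as a double sum over $j,j'$ and $Q,Q'$. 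The operators $P_{s,v}(I-\Phi_{\tau(j)})$ are bounded on $L^2$ uniformly, so the main terms reduce to $\langle T^*T\,h_Q,h_{Q'}\rangle$-type pairings. We bound the composed kernel pointwise by $\rho^2 2^{-jn}2^{-s(n-1)}$ times the indicator of a $2^{j}\times(2^{j-s})^{n-1}$ tube. Summing over $Q'$ uses disjointness and the bound $\|h_{Q'}\|_1\le\alpha|Q'|$, which gives the factor $\alpha$; the sum over $Q$ then gives $\sum_j\|H_{j-s}\|_1$.

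\emph{The gain $2^{-s/4}$.} This comes from the $P_{s,v}$ localization. On its frequency support, $\xi$ is nearly orthogonal to $e_v$, namely $|\langle\xi/|\xi|,e_v\rangle|\le 4\cdot 2^{-s/2}$. Integrating by parts along $e_v$ in the radial variable of the kernel (the $r$-integral in polar coordinates) therefore does not gain. Instead, Seeger's trick is to note that the Fourier transform of the plate-shaped kernel concentrates where $|\langle\xi,e_v\rangle|\lesssim2^{-j}$, and $P_{s,v}$ keeps only $\xi$ with angle within $2^{-s/2}$ of $e_v^\perp$. Combined with the $2^{s(n-2)/2}$ overlap count against the $2^{-s(n-1)}$ volume, this yields a net factor $2^{-s/2}$ in the squared norm, i.e.\ $2^{-s/4}$ after taking square roots. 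The cross terms $j\ne j'$ are handled by the factor $(I-\Phi_{\tau(j)})$ and by the almost-orthogonality of scales of the kernels.

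I expect the main obstacle to be making this gain rigorous for a non-convolution kernel $K_j^{s,\delta}(x,y)$, since Seeger's Fourier argument is for convolution kernels. My first attempt would be to freeze the $y$-variable at the center of each $Q$: by (1.4), replacing $K_j^{s,\delta}(x,y)$ by $K_j^{s,\delta}(x,y_Q)$ costs $2^{-s\delta}$. Then I would use the $x$-smoothness at scale $2^{j-l_\delta(s)}$ from the mollification to integrate by parts in the direction transverse to $e_v$, accepting a harmless loss $s^{C/\delta}$ that is absorbed because the estimate for the error term is still summable.
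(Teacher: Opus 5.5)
\textbf{Verdict: genuine gap in the key step.} Your outline has the paper's skeleton: Minkowski in $(\eta,t)$, then Cauchy--Schwarz (equivalently, your duality test) in $v$, then a per-$v$ $T^*T$ estimate over plates. But your derivation of the gain $2^{-s/4}$ fails.

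\emph{The overlap count.} The bound you actually use, $\sum_v|\Psi(2^{s/2}\langle\xi/|\xi|,e_v^s\rangle)|^2\lesssim 2^{s(n-2)/2}$, is false. For fixed $\xi$ the admissible $e_v^s$ lie in a band of width $\approx 2^{-s/2}$ around the great sphere $\xi^\perp\cap\mathbb{S}^{n-1}$. Since they are $\approx 2^{-s}$-separated, there are $\approx 2^{s(n-3/2)}$ of them; for $n=2$ that is $2^{s/2}$, not $O(1)$. This is the count you wrote down and then discarded. With it, the gain is pure bookkeeping and needs no Fourier analysis of the plate kernel:
\[
\Big\|\sum_v P_{s,v}G_v\Big\|_2^2\lesssim 2^{s(n-\frac{3}{2})}\sum_v\|G_v\|_2^2,\qquad G_v=\sum_{j}(I-\Phi_{\tau(j)})T_{\Omega_v,K_j^{s,\delta}}H_{j-s}^{\eta,t}.
\]
The plate argument gives $\|G_v\|_2^2\lesssim 2^{-2s(n-1)}\rho^2\alpha\sum_j\|H_{j-s}^{\eta,t}\|_1$. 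There are $\approx 2^{s(n-1)}$ indices $v$, so the total factor is $2^{s(n-\frac{3}{2})+s(n-1)-2s(n-1)}=2^{-s/2}$. Your product of ``$2^{s(n-2)/2}$ overlap against $2^{-s(n-1)}$ volume'' equals $2^{-sn/2}$, not $2^{-s/2}$. The heuristic about the Fourier transform of the plate kernel concentrating where $|\langle\xi,e_v\rangle|\lesssim 2^{-j}$ is not the mechanism here.

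\emph{The non-convolution ``obstacle.''} Consequently the main obstacle you name does not exist. Once $P_{s,v}$ has been removed by Cauchy--Schwarz, the per-$v$ estimate needs only the pointwise domination $|\Omega_v(x-y)K_j^{s,\delta}(x,y)|\lesssim K_{j,s,v}(x-y)$, where $K_{j,s,v}(x)=2^{-jn}\rho\,\chi_{[2^{j-2},2^{j+2}]}(|x|)\chi_{E_v^s}(x/|x|)$. This is a positive convolution kernel with $\|K_{j,s,v}\|_1\lesssim\rho 2^{-s(n-1)}$, $\|K_{j,s,v}\|_\infty\lesssim\rho 2^{-jn}$, and support in a $2^j\times(2^{j-s})^{n-1}$ plate along $e_v^s$. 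So the non-convolution structure of $K$ never enters. Your remedy would fail in any case. The mollification in (2.4) acts only on $K_j$, not on the factor $\Omega_v(x-y)$, which is merely bounded on $E_v^s$; there is no transverse regularity to integrate by parts against. Freezing $y$ in $K$ also does not produce a convolution kernel.

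\emph{Two smaller repairs.}
\begin{enumerate}
\item Since $I-\Phi_{\tau(j)}$ depends on $j$, its uniform $L^2$ bound does not let you pull it out of the $j$-sum. The paper splits it off by the triangle inequality and uses $\phi\ge 0$. One clean way to finish: $|\Phi_{\tau(j)}T_{\Omega_v,K_j^{s,\delta}}H_{j-s}^{\eta,t}|\le\Phi_{\tau(j)}u_j$ with $u_j=K_{j,s,v}*|H_{j-s}^{\eta,t}|\ge 0$. Then $\|\sum_j\Phi_{\tau(j)}u_j\|_2\lesssim\|\sum_j u_j\|_2$ by duality, since $\sup_r|\Phi_r g|\lesssim\mathcal{M}_{HL}g$.
\item The cross terms in $j$ are controlled not by $(I-\Phi)$ or orthogonality of scales, but by $\|\sum_jF_j\|_2^2\le 2\sum_j\sum_{i\le j}\int|F_j||F_i|$. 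Add the geometric fact that, for $i\le j$, every cube of side $2^{i-s}$ meeting a translate $x+\mathcal{R}_{j,v}^s$ lies in a fixed dilate of it. Disjointness and $\|h_Q\|_1\le\alpha|Q|$ then give $\sup_x\sum_{i\le j}\int_{x+\mathcal{R}_{j,v}^s}|H_{i-s}^{\eta,t}|\lesssim\alpha 2^{jn-s(n-1)}$. Combined with $\|\widetilde{K_{j,s,v}}*K_{i,s,v}\|_\infty\lesssim\rho^2 2^{-jn-s(n-1)}$, this is exactly the per-$v$ bound above.
\end{enumerate}
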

\begin{proof}
Fix a function $\tau$ defined on $\mathbb{Z}$. In \cite{CLW,Se} it was pointed out  that for every $\xi\neq 0$, the point $\xi$ belongs to at most
$C(n)2^{s(n-\frac{3}{2})}$ of ${\rm supp}(\widehat{P}_{s,v})$. It
follows that
$$\sup\limits_{\xi\neq0}\sum\limits_{v}|\Psi(2^{s/2}\langle|\xi|^{-1}\xi,e_v^s\rangle)|^2\lesssim_n2^{s(n-\frac{3}{2})}.$$
By Fubini's theorem, one obtains
$$\begin{array}{ll}
&\quad\displaystyle\Big\|\sum\limits_{v}\sum\limits_{j\in\mathbb{Z}}\Gamma_{\Omega_v,K_j^{s,\delta},a}^{P_{v,s}(I-\Phi_{\tau(j)})}H_{j-s}^{\eta,t}\Big\|_2\\
&\leq\displaystyle\iint_{[0,1]\times\mathbb{R}^n}|a^{x,t}(\eta)|\Big\|\sum\limits_{v}P_{v,s}\sum\limits_{j\in\mathbb{Z}}(I-\Phi_{\tau(j)})T_{\Omega_v,K_j^{s,\delta}}H_{j-s}^{\eta,t}\Big\|_2d\eta dt.
\end{array}$$
Applying Plancherel's theorem and the Cauchy--Schwarz inequality,
one obtains
$$\begin{array}{ll}
&\quad\displaystyle\Big\|\sum\limits_{v}P_{v,s}\sum\limits_{j\in\mathbb{Z}}(I-\Phi_{\tau(j)})T_{\Omega_v,K_j^{s,\delta}}H_{j-s}^{\eta,t}\Big\|_2^2\\
&=\displaystyle\int_{\mathbb{R}^n}\Big(\sum\limits_v\Psi(2^{\frac{s}{2}}\langle|\xi|^{-1}\xi,e_v^s\rangle)
\mathcal{F}\Big(\sum\limits_{j\in\mathbb{Z}}(I-\Phi_{\tau(j)})T_{\Omega_v,K_j^{s,\delta}}H_{j-s}^{\eta,t}\Big)(\xi)\Big)^2d\xi\\
&\leq\displaystyle\int_{\mathbb{R}^n}\Big(\sum\limits_v|\Psi(2^{\frac{s}{2}}\langle|\xi|^{-1}\xi,e_v^s\rangle)|^2
\Big)\Big(\sum\limits_v\Big|\mathcal{F}
\Big(\sum\limits_{j\in\mathbb{Z}}(I-\Phi_{\tau(j)})T_{\Omega_v,K_j^{s,\delta}}H_{j-s}^{\eta,t}\Big)(\xi)\Big|^2\Big)d\xi\\
&\lesssim\displaystyle 2^{s(n-\frac{3}{2})}\sum\limits_v\Big\|\sum\limits_{j\in\mathbb{Z}}(I-\Phi_{\tau(j)})T_{\Omega_v,K_j^{s,\delta}}H_{j-s}^{\eta,t}\Big\|_2^2.
\end{array}$$
Note that $\text{card}(\mathfrak{E}^s)\approx 2^{s(n-1)}$. Hence,
for (2.11) it suffices to show that
$$\Big\|\sum\limits_{j\in\mathbb{Z}}(I-\Phi_{\tau(j)})T_{\Omega_v,K_j^{s,\delta}}H_{j-s}^{\eta,t}\Big\|_2^2
\lesssim2^{-2s(n-1)}\rho^2\alpha\sum\limits_{j}\|H_{j-s}^{\eta,t}\|_1.\eqno(2.12)$$
In fact, once (2.12) was proved, we have
$$\begin{array}{ll}
&\quad\displaystyle\Big\|\sum\limits_{v}\sum\limits_{j\in\mathbb{Z}}\Gamma_{\Omega_v,K_j^{s,\delta},a}^{P_{v,s}(I-\Phi_{\tau(j)})}H_{j-s}^{\eta,t}\Big\|_2\\
&\lesssim\displaystyle\iint_{[0,1]\times\mathbb{R}^n}|a^{x,t}(\eta)|\Big(2^{s(n-\frac{3}{2})}2^{s(n-1)}2^{-2s(n-1)}\rho^2\alpha\sum\limits_{j}\|H_{j-s}^{\eta,t}\|_1\Big)^{1/2}d\eta dt\\
&\lesssim\displaystyle2^{-s/4}\rho\alpha^{1/2}\|\hat{a}\|_1\beta^{1/2}.
\end{array}$$

Now we prove (2.12). It follows by (2.4) that
$$|T_{\Omega_v,K_j^{s,\delta}}H_{j-s}^{\eta,t}(x)|\lesssim K_{j,s,v}*|H_{j-s}^{\eta,t}|(x),\eqno(2.13)$$
where $K_{j,s,v}(x)=2^{-jn}\rho\chi_{[2^{j-2},2^{j+2}]}(|x|)\chi_{E_v^s}(\frac{x}{|x|})$.
It follows by (2.13) that
$$\begin{array}{ll}
&\quad\displaystyle\Big\|\sum\limits_{j\in\mathbb{Z}}T_{\Omega_v,K_j^{s,\delta}}H_{j-s}^{\eta,t}\Big\|_2^2\\
&\leq\displaystyle\sum\limits_{j\in\mathbb{Z}}\|T_{\Omega_v,K_j^{s,\delta}}H_{j-s}^{\eta,t}\|_2^2
+\displaystyle2\sum\limits_{j\in\mathbb{Z}}\sum\limits_{i=-\infty}^{j-1}
\int_{\mathbb{R}^n}|T_{\Omega_v,K_j^{s,\delta}}H_{j-s}^{\eta,t}(x)T_{\Omega_v,K_i^{s,\delta}}H_{i-s}^{\eta,t}(x)|dx\\
&\leq\displaystyle2\sum\limits_{j\in\mathbb{Z}}\sum\limits_{i=-\infty}^{j}
\int_{\mathbb{R}^n}|T_{\Omega_v,K_j^{s,\delta}}H_{j-s}^{\eta,t}(x)T_{\Omega_v,K_i^{s,\delta}}H_{i-s}^{\eta,t}(x)|dx\\
&\lesssim\displaystyle\sum\limits_{j\in\mathbb{Z}}\sum\limits_{i=-\infty}^{j}
\int_{\mathbb{R}^n}\widetilde{K_{j,s,v}}*K_{i,s,v}*|H_{i-s}^{\eta,t}|(x)|H_{j-s}^{\eta,t}|(x)dx.
\end{array}\eqno(2.14)$$
We also note that
$$|\Phi_{\tau(j)}T_{\Omega_v,K_j^{s,\delta}}H_{j-s}^{\eta,t}(x)|\lesssim
\phi_{\tau(j)}*K_{j,s,v}*|H_{j-s}^{\eta,t}|(x).$$
An argument similar to (2.14) gives that
$$\begin{array}{ll}
&\quad\displaystyle\Big\|\sum\limits_{j\in\mathbb{Z}}\Phi_{\tau(j)}T_{\Omega_v,K_j^{s,\delta}}H_{j-s}^{\eta,t}
\Big\|_2^2\\
&\lesssim\displaystyle\sum\limits_{j\in\mathbb{Z}}\sum\limits_{i=-\infty}^{j}\int_{\mathbb{R}^n}
\widetilde{K_{j,s,v}}*K_{i,s,v}*|H_{i-s}^{\eta,t}|(x)\phi_{\tau(j)}*\widetilde{\phi_{\tau(i)}}*|H_{j-s}^{\eta,t}|(x)dx.
\end{array}\eqno(2.15)$$

By the fact that $|E_v^s|\leq C2^{-s(n-1)}$, we see that
$\|K_{j,s,v}\|_1\lesssim\rho|E_v^s|\lesssim\rho2^{-s(n-1)}$.
This together with the trivial estimate
$\|K_{j,s,v}\|_\infty\lesssim 2^{-jn}\rho$ implies that
$$\|\widetilde{K_{j,s,v}}*K_{i,s,v}\|_\infty\leq\|\widetilde{K_{j,s,v}}\|_\infty\|K_{i,s,v}\|_1
\lesssim2^{-jn-s(n-1)}\rho^2.\eqno(2.16)$$
By the support of $K_{j,s,v}$ and the fact that ${\rm diam}(E_v^s)\leq 2^{-s}$,
we see that ${\rm supp}(K_{j,s,v})\subset R_j$, where $R_j$ is a rectangle
centered at the origin with the $n-1$ sides of length $2^{j-s+3}$ and one
long side of length $2^{j+4}$. Thus, we have ${\rm supp}(\widetilde{K_{j,
s,v}}*K_{j,s,v})\subset R_j+R_j$. Since the original boxes are carefully
chosen, $R_j+R_j$ is also a rectangular box centered at 0 with $n-1$ short
sides of length $2^{j-s+10}$ and one long side of length $2^{j+10}$, the
long side being parallel to $e_{v}^{s}$, which is denoted by $\mathcal{R}_{j,
v}^s$. When $i\leq j$, we note that $R_i+R_j\subset R_j+R_j$. Then
${\rm supp}(\widetilde{K_{i,s,v}}*K_{j,s,v})\subset\mathcal{R}_{j,v}^{s}$.
For fixed $v$ and $i,\,j$ with $i\leq j$, it follows by (2.16) that
$$\widetilde{K_{j,s,v}}*K_{i,s,v}*|H_{i-s}^{\eta,t}|(x)\lesssim\displaystyle2^{-jn-s(n-1)}\rho^2
\int_{x+\mathcal{R}_{j,v}^{s}}|H_{i-s}^{\eta,t}|(y)dy.$$
Combining this with (2.14) and (2.15) implies that
$$\begin{array}{ll}
&\quad\displaystyle\Big\|\sum\limits_{j\in\mathbb{Z}}(I-\Phi_{\tau(j)})T_{\Omega_v,K_j^{s,\delta}}H_{j-s}^{\eta,t}\Big\|_2^2\\
&\lesssim\displaystyle \Big\|\sum\limits_{j\in\mathbb{Z}}T_{\Omega_v,K_j^{s,\delta}}H_{j-s}^{\eta,t}
\Big\|_2^2+\Big\|\sum\limits_{j\in\mathbb{Z}}\Phi_{\tau(j)}T_{\Omega_v,K_j^{s,\delta}}H_{j-s}^{\eta,t}
\Big\|_2^2\\
&\lesssim\displaystyle\rho^22^{-s(n-1)}\sum\limits_{j\in\mathbb{Z}}2^{-jn}
\sup\limits_{x\in\mathbb{R}^n}\sum\limits_{i=-\infty}^{j}\int_{x+\mathcal{R}_{j,v}^{s}}|H_{i-s}^{\eta,t}|(y)dy\\
&\quad\times\displaystyle\int_{\mathbb{R}^n}(|H_{j-s}^{\eta,t}|(x)+\phi_{\tau(j)}*\widetilde{\phi_{\tau(i)}}*|H_{j-s}^{\eta,t}|(x))dx.
\end{array}\eqno(2.17)$$
Note that the cubes $Q$ are disjoint. Then for all $x\in\mathbb{R}^{n}$,
$$\begin{array}{ll}
&\quad\displaystyle\sum\limits_{i=-\infty}^j\int_{x+\mathcal{R}_{j,v}^{s}}|H_{i-s}^{\eta,t}|(y)dy
\leq\displaystyle\sum\limits_{i=-\infty}^j\sum\limits_{\substack{L(Q)=2^{i-s},\atop Q \cap(x+\mathcal{R}_{j,v}^{s})\neq\emptyset}}\int_{Q}|h_{Q}^{\eta,t}(x)|dx\\
&\qquad\qquad\qquad\qquad\qquad\leq\displaystyle\sum\limits_{i=-\infty}^j\alpha\sum_{\substack{L(Q)=2^{i-s},\atop
Q \cap(x+\mathcal{R}_{j,v}^{s}) \neq \emptyset}}|Q|\\
&\qquad\qquad\qquad\qquad\qquad\leq\alpha|x+2n\mathcal{R}_{j,\nu}^{s}|\lesssim2^{jn-s(n-1)}\alpha.
\end{array}\eqno(2.18)$$
It follows from (2.17) and (2.18) that
$$\Big\|\sum\limits_{j\in\mathbb{Z}}(I-\Phi_{\tau(j)})T_{\Omega_v,K_j^{s,\delta}}H_{j-s}^{\eta,t}\Big\|_2^2\lesssim \rho^2\alpha2^{-2s(n-1)}
\sum\limits_{j\in\mathbb{Z}}\|H_{j-s}^{\eta,t}\|_1.$$
This gives (2.12) and completes the proof of Lemma \ref{lem2.4}.
\end{proof}

\begin{lemma}\label{lem2.5}
Let $\gamma\in(0,1)$, $\delta\in(0,1)$, $N_1\in\mathbb{Z}^{+}$,
$N>\frac{n}{2}$, $\epsilon_0\in(0,1)$, $\rho>0$, $s\geq1$ and
$\Omega_v\in L^1(\mathbb{S}^{n-1})$ and ${\rm supp}(\Omega_v)
\subset E_v^s\cap\{\theta\in\mathbb{S}^{n-1}:
|\Omega_v(\theta)|\leq\rho\}$. Let $f,\,B_{j}^{\eta,t}$ be as
in $(2.2)$. Then we have
$$\Big\|\sum\limits_{v}\sum\limits_{j\in\mathbb{Z}}\Gamma_{\Omega_v,K_j^{s,\delta},a}^{Q_{v,s}(I-\Phi_{j-s\gamma})}B_{j-s}^{\eta,t}\Big\|_1
\lesssim\rho\|\hat{a}\|_12^{-\frac{\upsilon s}{2}}\|f\|_1,\eqno(2.19)$$
where $\upsilon=\min\{(\epsilon_0-\frac{1}{2})N_1-N,\ \ 2(\gamma-\epsilon_0)\}$.
\end{lemma}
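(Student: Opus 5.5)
Following the pattern of Lemmas 2.2--2.3, I would first reduce (2.19) to an estimate that is uniform in $(\eta,t)$. By Fubini's theorem and $|a^{x,t}(\eta)|\lesssim|\hat a(\eta)|$, the left side of (2.19) is at most
$$\iint_{[0,1]\times\mathbb{R}^n}|\hat a(\eta)|\sum_v\sum_{j}\big\|Q_{v,s}(I-\Phi_{j-s\gamma})T_{\Omega_v,K_j^{s,\delta}}B_{j-s}^{\eta,t}\big\|_1\,d\eta\,dt .$$
Using (2.2) and the splitting $B_{j-s}^{\eta,t}=\sum_{L(Q)=2^{j-s}}b_Q^{\eta,t}$, it then suffices to prove, for each $v$, each $j$ and each cube $Q$ with $L(Q)=2^{j-s}$,
$$\big\|Q_{v,s}(I-\Phi_{j-s\gamma})T_{\Omega_v,K_j^{s,\delta}}b_Q^{\eta,t}\big\|_1\lesssim\rho\,2^{-s(n-1)}2^{-\upsilon s/2}\|b_Q^{\eta,t}\|_1 .$$
Summing over the roughly $2^{s(n-1)}$ indices $v$ then gives (2.19). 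Since $|\Omega_v|\le\rho\chi_{E_v^s}$, the factor $\rho2^{-s(n-1)}$ is simply $\|\Omega_v\|_{L^1}$.

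To prove the single-cube bound, I would write $T_{\Omega_v,K_j^{s,\delta}}b_Q(x)=\int \mathcal{K}(x,y)b_Q(y)\,dy$ with $\mathcal{K}(x,y)=\Omega_v(x-y)K_j^{s,\delta}(x,y)$. By (2.4), $x\mapsto K_j^{s,\delta}(x,y)$ is a mollification at scale $2^{j-l_\delta(s)}$, so $|\partial_x^\alpha K_j^{s,\delta}|\lesssim 2^{-jn}2^{-(j-l_\delta(s))|\alpha|}$. The mollification $\Phi_{j-s\gamma}$ contributes smoothness at scale $2^{j-s\gamma}$. The operator $Q_{v,s}$ restricts to frequencies $\xi$ with $|\langle\xi/|\xi|,e_v^s\rangle|\ge 2^{1-s/2}$, whereas $\Omega_v$ is supported on a cap of diameter $2^{-s}$ around $e_v^s$. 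For fixed $y$, the kernel as a function of $x$ is a measure spread along directions within $2^{-s}$ of $e_v^s$ at radii $\sim2^j$. Integrating by parts $N_1$ times in the direction $e_v^s$ (the operator $\langle\xi,e_v^s\rangle^{-1}\partial_{e_v^s}$) gains a factor $(2^{s/2}/(|\xi|2^{j}))$ against the phase for directional derivatives of the radial cutoff $\beta_j$ and of $K$. The factor $(I-\Phi_{j-s\gamma})$ localizes to $|\xi|\gtrsim 2^{-j+s\gamma}$.

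I would split the frequencies at $|\xi|\sim 2^{-j+s\epsilon_0}$. Below this threshold, $1-\hat\phi(2^{j-s\gamma}\xi)=O(2^{s(\epsilon_0-\gamma)})$, and squaring in the $L^2$ step produces the term $2(\gamma-\epsilon_0)$. Above it, the $N_1$ integrations by parts give $(2^{s/2}2^{-s\epsilon_0})^{N_1}$. For the conversion from $L^2$ to $L^1$, I would use a weighted Cauchy--Schwarz argument with weight $(1+2^{-j}|x-y_Q|)^{N}$, $N>n/2$, which costs $2^{sN}$ and Sobolev-type derivatives in $\xi$ of the multiplier. Together these give the exponent $(\epsilon_0-\tfrac12)N_1-N$ in $\upsilon$. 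The regularity of $K$ in the direction $e_v^s$ on scales above $2^{j-l_\delta(s)}$ is needed here; each derivative costs only $2^{l_\delta(s)}\lesssim s^{2/\delta}$, which is polynomial in $s$ and can be absorbed.

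The main obstacle is this stationary-phase/integration-by-parts step. One must control derivatives of $\Psi(2^{s/2}\langle\xi/|\xi|,e_v^s\rangle)$, each of which costs $2^{s/2}/|\xi|$. One must also make sure the differentiation falls on the smooth parts ($\beta_j$, $K_j^{s,\delta}$, $\phi$) and not on the rough $\Omega_v$, for which the radial structure $x-y=r\theta$ is used. Following \cite{Se} and Lemma 4.6 of \cite{DL}, I would first try polar coordinates $x-y=r\theta$ and integrate by parts in $r$ only, because $\langle\theta,\xi\rangle\approx\langle e_v^s,\xi\rangle$ on the support of $Q_{v,s}$. The vanishing mean of $b_Q$ is not needed in this step.
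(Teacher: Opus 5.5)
\textbf{Gap: the $L^2\to L^1$ conversion fails on the low frequencies.} Your reduction matches the paper: Fubini in $(\eta,t)$, then a per-direction bound carrying $\|\Omega_v\|_{L^1}\lesssim\rho2^{-s(n-1)}$, summed over $v$. So does your high-frequency mechanism: polar coordinates, $N_1$ integrations by parts in $r$, and $|\langle\theta,\xi\rangle|\gtrsim2^{-s/2}|\xi|$ on the support of the $Q_{v,s}$ symbol. The problem is the band $|\xi|\lesssim2^{-j+s\epsilon_0}$. There you use the sup bound $|1-\hat\phi(2^{j-s\gamma}\xi)|\lesssim2^{-s(\gamma-\epsilon_0)}$ in $L^2$, then Cauchy--Schwarz against $(1+2^{-j}|x-y_Q|)^{-N}$, whose $L^2$ norm is $\approx2^{jn/2}$. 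The kernel $\Omega_v(x-y)K_j^{s,\delta}(x,y)$ has size $\rho2^{-jn}$ on a tube of volume $\approx2^{jn}2^{-s(n-1)}$. So the best $L^2$ input is $\|T_{\Omega_v,K_j^{s,\delta}}b_Q\|_2\lesssim\rho2^{-jn/2}2^{-s(n-1)/2}\|b_Q\|_1$. This gives $\rho2^{-s(n-1)/2}2^{-s(\gamma-\epsilon_0)}\|b_Q\|_1$ per direction instead of $\rho2^{-s(n-1)}2^{-s(\gamma-\epsilon_0)}\|b_Q\|_1$. The loss $2^{s(n-1)/2}$ comes from averaging a function concentrated on a thin tube over a ball of radius $2^j$. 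Summing over the $\approx2^{s(n-1)}$ directions yields $\rho2^{s((n-1)/2-(\gamma-\epsilon_0))}\|f\|_1$, which grows in $s$. Indeed $\upsilon>0$ forces $\epsilon_0>\frac12$, hence $\gamma-\epsilon_0<\frac12$. Using instead $\|\widehat{T_{\Omega_v,K_j^{s,\delta}}b_Q}\|_\infty\lesssim\rho2^{-s(n-1)}\|b_Q\|_1$ over the ball $|\xi|\lesssim2^{-j+s\epsilon_0}$ loses $2^{s\epsilon_0n/2}$, which is no better. Also, the $2(\gamma-\epsilon_0)$ in $\upsilon$ does not come from ``squaring''. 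Since the bound is $2^{-\upsilon s/2}$, the low band only needs a gain of $2^{-s(\gamma-\epsilon_0)}$, and it must get it in $L^1$.

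\textbf{Missing idea: stay in $L^1$ throughout, as the paper does.} Insert $\sum_k\tilde{L}_kL_k$ (with $|\xi|\sim2^{-k}$) and split at $k=j-[s\epsilon_0]$. For $k\ge j-[s\epsilon_0]$, only operator norms are used. First, $\|(I-\Phi_{j-s\gamma})\tilde{L}_k\|_{L^1\to L^1}\lesssim2^{j-s\gamma-k}$, by the mean value theorem on $\mathcal{F}^{-1}(\nu_k)$. Second, $\|Q_{v,s}L_k\|_{L^1\to L^1}\lesssim1$ uniformly in $s$: after rescaling $\langle\xi,e_v^s\rangle$ by $2^{s/2}$, the symbol of $P_{v,s}L_k$ is a uniformly smooth bump. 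Third, $\|T_{\Omega_v,K_j^{s,\delta}}B_{j-s}^{\eta,t}\|_1\lesssim\rho2^{-s(n-1)}\|B_{j-s}^{\eta,t}\|_1$. The geometric sum over $k$ then gives exactly $2^{-s(\gamma-\epsilon_0)}$ with no loss. For $k<j-[s\epsilon_0]$, the conversion to $L^1$ is done on the kernel $D(x,y)$. After the $r$-integrations by parts, one integrates by parts $N$ times in $\zeta$ with $(I-2^{-2k}\Delta_\zeta)^N$. This produces $(1+2^{-2k}|x-y-r\theta|^2)^{-N}$, a weight at the frequency scale $2^k$ centred at each point $y+r\theta$ of the tube, so there is no geometric loss. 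The factor $2^{sN}$ is the cost of the $\zeta$-derivatives of $\Psi(2^{s/2}\langle\zeta/|\zeta|,e_v^s\rangle)$ at that scale. With your $2^j$-scale weight, those derivatives cost at most $2^{s(1/2-\epsilon_0)}$ each in the high band. So your claimed $2^{sN}$ and the exponent $(\epsilon_0-\frac12)N_1-N$ do not follow from your own scheme. The high band can be handled your way, with a different exponent, but the low band cannot.
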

\begin{proof}
It follows by Fubini's theorem that
$$\begin{array}{ll}
&\quad\displaystyle\Big\|\sum\limits_{v}\sum\limits_{j\in\mathbb{Z}}\Gamma_{\Omega_v,K_j^{s,\delta},a}^{Q_{v,s}(I-\Phi_{j-s\gamma})}B_{j-s}^{\eta,t}\Big\|_1\\
&\lesssim\displaystyle\iint_{[0,1]\times\mathbb{R}^n}|a^{x,t}(\eta)|\Big\|\sum\limits_{v}\sum\limits_{j\in\mathbb{Z}}Q_{v,s}(I-\Phi_{j-s\gamma})T_{\Omega_v,K_j^{s,\delta}}B_{j-s}^{\eta,t}\Big\|_1d\eta dt.
\end{array}$$
Hence, for (2.19), it suffices to show that
$$\Big\|\sum\limits_{v}\sum\limits_{j\in\mathbb{Z}}Q_{v,s}(I-\Phi_{j-s\gamma})T_{\Omega_v,K_j^{s,\delta},a}B_{j-s}^{\eta,t}\Big\|_1\lesssim \rho2^{-\upsilon s}\|f\|_1.\eqno(2.20)$$

In order to prove (2.20), we introduce the multiplier operators $L_k$ and
$\tilde{L}_k$ by
$$\widehat{L_k f}(\xi)=\varpi_k(\xi)\hat{f}(\xi),\ \ \ \widehat{\tilde{L}_kf}(\xi)=\nu_k(\xi)\hat{f}(\xi),$$
where $\varpi_k(\xi)=\varphi(2^k\xi)-\varphi(2^{k+1}\xi)$ and
$\nu_k(\xi)=\psi(2^k\xi)$. Here $\varphi$ is a radial $C^\infty$
function such that $\varphi(\xi)=1$ for $|\xi|\leq1$, $\varphi(\xi)=0$ for
$|\xi|\geq2$ and $0\leq\varphi(\xi)\leq1$ for all $\xi\in\mathbb{R}^n$.
$\psi$ is a radial $C^\infty$ function such that $\psi(\xi)=1$
for $1/2\leq|\xi|\leq2$, $\supp(\psi)\subset[1/4,4]$ and $0\leq\psi\leq1$
for all $\xi\in\mathbb{R}^n$. Observe that $\varpi_k=\nu_k\varpi_k$ and
$L_k=\tilde{L}_kL_k$. Moreover, $\supp(\varpi_k)\subset\{\xi\in\mathbb{R}^n:
2^{-k-1}\leq|\xi|\leq 2^{-k+1}\}$ and $\sum_{k\in\mathbb{Z}}\varpi_k(\xi)=1$.
Write
$$Q_{v,s}(I-\Phi_{j-s\gamma})T_{\Omega_v,K_j^{s,\delta}}B_{j-s}^{\eta,t}
=\sum\limits_{k}(I-\Phi_{j-s\gamma})\tilde{L}_kQ_{v,s}L_kT_{\Omega_v,K_j^{s,\delta}}B_{j-s}^{\eta,t}.$$
It follows that
$$\begin{array}{ll}
&\quad\displaystyle\Big\|\sum\limits_{v}\sum\limits_{j\in\mathbb{Z}}Q_{v,s}(I-\Phi_{j-s\gamma})T_{\Omega_v,K_j^{s,\delta},a}B_{j-s}^{\eta,t}\Big\|_1\\
&\leq\displaystyle\sum\limits_{v}\sum\limits_{j\in\mathbb{Z}}\sum\limits_{k\geq j-[s\varepsilon_0]}\|(I-\Phi_{j-s\gamma})
\tilde{L}_kQ_{v,s}L_kT_{\Omega_v,K_j^{s,\delta}}B_{j-s}^{\eta,t}\|_1\\
&\quad+\displaystyle\sum\limits_{v}\sum\limits_{j\in\mathbb{Z}}\sum\limits_{k<j-[s\varepsilon_0]}\|(I-\Phi_{j-s\gamma})
\tilde{L}_kQ_{v,s}L_kT_{\Omega_v,K_j^{s,\delta}}B_{j-s}^{\eta,t}\|_1=:I_1+I_2.
\end{array}\eqno(2.21)$$

We first estimate $I_1$. Note that $(I-\Phi_{j-s\gamma})\tilde{L}_k$ is a
convolution operator with kernel $\int_{\mathbb{R}^n}(\mathcal{F}^{-1}
(\nu_k)(y)-\mathcal{F}^{-1}(\nu_k)(y-z))\phi_{j-s\gamma}(z)dz$.
By the mean value theorem, one gets
$$\|(I-\Phi_{j-s\gamma})\tilde{L}_k\|_{L^1\rightarrow L^1}
\leq\displaystyle\int_{\mathbb{R}^n}\int_{\mathbb{R}^n}
|\mathcal{F}^{-1}(\nu_k)(y)-\mathcal{F}^{-1}(\nu_k)(y-z)|\phi_{j-s\gamma}(z)dzdy
\lesssim 2^{j-s\gamma-k}.$$
We also note that $\|Q_{v,s}L_k\|_{L^1\rightarrow L^1}\lesssim1$ and
$$\|T_{\Omega_v,K_j^{s,\delta}}B_{j-s}^{\eta,t}\|_1\lesssim \rho2^{-s(n-1)}\|B_{j-s}^{\eta,t}\|_1.$$
Thus,
$$\begin{array}{ll}
&I_1\lesssim\displaystyle\sum\limits_{v}\sum\limits_{j\in\mathbb{Z}}\sum\limits_{k\geq j-[s\varepsilon_0]}\rho2^{-s(n-1)}2^{j-s\gamma-k}\|B_{j-s}^{\eta,t}\|_1\\
&\quad\lesssim\displaystyle\rho \sum\limits_{v}2^{-s(n-1)-s\gamma+[s\varepsilon_0]}\sum\limits_{j\in\mathbb{Z}}\|B_{j-s}^{\eta,t}\|_1\lesssim\rho 2^{-s(\gamma-\varepsilon_0)}\|f\|_1.
\end{array}\eqno(2.22)$$

Next we estimate $I_2$. We first prove that
$$\begin{array}{ll}
&\quad\|(I-\Phi_{j-s\gamma})\tilde{L}_kQ_{v,s}L_kT_{\Omega_v,K_j^{s,\delta}}B_{j-s}^{\eta,t}\|_1\\
&\lesssim s^{2\delta^{-1}N_1}2^{(\frac{s}{2}+k-j)N_1+sN}\|\Omega_v\|_{L^1(\mathbb{S}^{n-1})}\|B_{j-s}^{\eta,t}\|_1.
\end{array}\eqno(2.23)$$
Observe that
$$\|(I-\Phi_{j-s\gamma})\tilde{L}_k\|_{L^1\rightarrow L^1}\leq\|\mathcal{F}^{-1}(\nu_k)-\phi_{j-s\gamma}*\mathcal{F}^{-1}(\nu_k)\|_{1}
\lesssim 1$$
and
$$\|(I-\Phi_{j-s\gamma})\tilde{L}_kQ_{v,s}L_kT_{\Omega_v,K_j^{s,\delta}}B_{j-s}^{\eta,t}\|_1
\leq\|(I-\Phi_{j-s\gamma})\tilde{L}_k\|_{L^1\rightarrow L^1}\|Q_{v,s}L_kT_{\Omega,K_j^{s,\delta}}B_{j-s}^{\eta,t}\|_1.$$
Then we have
$$\|(I-\Phi_{j-s\gamma})\tilde{L}_kQ_{v,s}L_kT_{\Omega_v,K_j^{s,\delta}}B_{j-s}^{\eta,t}\|_1\lesssim \|Q_{v,s}L_kT_{\Omega,K_j^{s,\delta}}B_{j-s}^{\eta,t}\|_1.$$
Thus, for (2.23) it is enough to show that
$$\|Q_{v,s}L_kT_{\Omega,K_j^{s,\delta}}B_{j-s}^{\eta,t}\|_1\lesssim s^{2\delta^{-1}N_1}2^{(\frac{s}{2}+k-j)N_1+sN}\|\Omega_v\|_{L^1(\mathbb{S}^{n-1})}\|B_{j-s}^{\eta,t}\|_1.\eqno(2.24)$$
For convenience, we set $m_{s,k}(\xi)=(1-\Psi(2^{\frac{s}{2}}\langle|\xi|^{-1}\xi,
e_v^s\rangle))\beta_k(\xi)$. Write
$$\begin{array}{ll}
&Q_{v,s}L_kT_{\Omega_v,K_j^{s,\delta}}B_{j-s}^{\eta,t}(x)=\displaystyle\int_{\mathbb{R}^n}\int_{\mathbb{R}^n}m_{s,k}(\zeta)e^{i\langle x-z,\zeta\rangle}d\zeta\int_{\mathbb{R}^n}\Omega_v(z-y)K_j^{s,\delta}(z,y)B_{j-s}^{\eta,t}(y)dydz\\
&\qquad\qquad\qquad\qquad\quad=\displaystyle\int_{\mathbb{R}^n}D(x,y)B_{j-s}^{\eta,t}(y)dy,
\end{array}$$
where
$$D(x,y)=\int_{\mathbb{R}^n}m_{s,k}(\zeta)\int_{\mathbb{R}^n}e^{i\langle x-z,\zeta\rangle}\Omega_v(z-y)K_j^{s,\delta}(z,y)dzd\zeta.$$
By Fubini's theorem, one obtains
$$\|Q_{v,s}L_kT_{\Omega_v,K_j^{s,\delta}}B_{j-s}^{\eta,t}\|_1\leq\int_{\mathbb{R}^n}
\int_{\mathbb{R}^n}|D(x,y)|dx|B_{j-s}^{\eta,t}(y)|dy\leq\sup\limits_{y\in\mathbb{R}^n}\|D(\cdot,y)\|_1\|B_{j-s}^{\eta,t}\|_1.$$
Thus, for (2.24) it suffices to conclude that
$$\sup\limits_{y\in\mathbb{R}^n}\|D(\cdot,y)\|_1\lesssim
s^{2\delta^{-1}N_1}2^{(\frac{s}{2}+k-j)N_1+sN}\|\Omega_v\|_{L^1(\mathbb{S}^{n-1})}.\eqno(2.25)$$

Next, we prove (2.25). Fix $y\in\mathbb{R}^n$. By some changes of variables, we obtain 
$$D(x,y)=\int_{\mathbb{R}^n}\int_{\mathbb{S}^{n-1}}\Omega_v(\theta)\int_0^\infty e^{i\langle x-y-r\theta,\zeta\rangle}K_j^{s,\delta}(y+r\theta,y)r^{n-1}drm_{s,k}(\zeta)d\zeta d\sigma(\theta).$$
Integrating by parts $N_1$ times with respect to $r$, we obtain
$$\begin{array}{ll}
&\quad\displaystyle\int_0^\infty e^{i\langle x-y-r\theta,\zeta\rangle}K_j^{s,\delta}(y+r\theta,y)r^{n-1}dr\\
&=\displaystyle\int_0^\infty e^{i\langle x-y-r\theta,\zeta\rangle}\partial_r^{N_1}(K_j^{s,\delta}(y+r\theta,y)r^{n-1})
(i\langle\theta,\zeta\rangle)^{-N_1}dr.
\end{array}$$
It follows that
$$\begin{array}{ll}
&D(x,y)=\displaystyle\int_{\mathbb{S}^{n-1}}\Omega_v(\theta)\int_0^\infty \int_{\mathbb{R}^n}e^{i\langle x-y-r\theta,\zeta\rangle}
(m_{s,k}(\zeta)(i\langle\theta,\zeta\rangle)^{-N_1})d\zeta\\
&\qquad\qquad\times\displaystyle\partial_r^{N_1}(K_j^{s,\delta}(y+r\theta,y)r^{n-1})
drd\sigma(\theta).
\end{array}$$
Integrating by parts $N$ times with respect to $\zeta$, we obtain
$$\begin{array}{ll}
&\quad\displaystyle\int_{\mathbb{R}^n}e^{i\langle x-y-r\theta,\zeta\rangle}
(m_{s,k}(\zeta)(i\langle\theta,\zeta\rangle)^{-N_1})d\zeta\\
&=\displaystyle\int_{\mathbb{R}^n}e^{i\langle x-y-r\theta,\zeta\rangle}
\frac{(I-2^{-2k}\bigtriangleup_\zeta)^N(m_{s,k}(\zeta)(i\langle\theta,\zeta\rangle)^{-N_1})}
{(1+2^{-2k}|x-y-r\theta|^2)^N}d\zeta.
\end{array}$$
Hence, we have
$$\begin{array}{ll}
&D(x,y)=\displaystyle\int_{\mathbb{S}^{n-1}}\Omega_v(\theta)\int_0^\infty\partial_r^{N_1}
(K_j^{s,\delta}(y+r\theta,y)r^{n-1})\\
&\qquad\qquad\times\displaystyle\int_{\mathbb{R}^n}e^{i\langle x-y-r\theta,\zeta\rangle}
\frac{(I-2^{-2k}\bigtriangleup_\zeta)^N(m_{s,k}(\zeta)(i\langle\theta,\zeta\rangle)^{-N_1})}
{(1+2^{-2k}|x-y-r\theta|^2)^N}d\zeta drd\theta.
\end{array}\eqno(2.26)$$
As in (4.9) \cite{DL}, we have that for any multi-indices $\alpha$,
$$|\partial_x^\alpha K_j^{s,\delta}(x,y)|\lesssim 2^{-(j-l_\delta(s))|\alpha|-jn}.$$
This together with the fact that $2^{j-2}\leq r\leq 2^{j+2}$ implies that
$$\begin{array}{ll}
&\quad|\partial_r^{N_1}(K_j^{s,\delta}(y+r\theta,y)r^{n-1})|\\
&=\displaystyle\Big|\sum\limits_{i=N_1-n+1}^{N_1}C_{N_1}^i\partial_r^i(K_j^{s,\delta}(y+r\theta,y))
\partial_r^{N_1-i}(r^{n-1})\Big|\lesssim s^{2\delta^{-1}N_1}2^{-(1+N_1)j}.
\end{array}\eqno(2.27)$$
By an argument analogous to that used to obtain (4.12) in \cite{DL},
$$|(I-2^{-2k}\bigtriangleup_\zeta)^N(m_{s,k}(\zeta)(i\langle\theta,\zeta\rangle)^{-N_1})|
\lesssim_{N_1}2^{(\frac{s}{2}+k)N_1+sN}.\eqno(2.28)$$
Observe that
$$\int_{\mathbb{R}^n}\frac{2^{-kn}}{(1+2^{-2k}|x-y-r\theta|^2)^N}dx\lesssim_n1,\eqno(2.29)$$
since $N>\frac{n}{2}$. Note that ${\rm supp}(m_{s,k})\subset\{x\in\mathbb{R}^n:
2^{-k-1}\leq|x|\leq 2^{-k+1}\}$. It follows from (2.26)--(2.29) that
$$\begin{array}{ll}
&\|D(\cdot,y)\|_1\lesssim_{N_1}\displaystyle s^{2\delta^{-1}N_1}2^{-(1+N_1)j+(\frac{s}{2}+k)N_1+sN}\\
&\qquad\qquad\quad\times\displaystyle\int_{\mathbb{R}^n}
\int_{\mathbb{S}^{n-1}}|\Omega_v(\theta)|\int_{2^{j-2}}^{2^{j+2}}\int_{{\rm supp}(m_{s,k})}
\frac{1}{(1+2^{-2k}|x-y-r\theta|^2)^N}d\zeta drd\theta dx\\
&\qquad\qquad\lesssim s^{2\delta^{-1}N_1}2^{-N_1j+(\frac{s}{2}+k)N_1+sN}\|\Omega_v\|_{L^1(\mathbb{S}^{n-1})}.
\end{array}$$
This gives (2.25). We get from (2.23) that
$$\begin{array}{ll}
&I_2\lesssim\displaystyle\sum\limits_{v}\sum\limits_{j\in\mathbb{Z}}\sum\limits_{k<j-[s\varepsilon_0]}s^{2\delta^{-1}N_1}2^{-N_1j+(\frac{s}{2}+k)N_1+sN}\rho|E_v^s|\|B_{j-s}^{\eta,t}\|_1\\
&\quad\lesssim\displaystyle\rho s^{2\delta^{-1}N_1}2^{(\frac{s}{2}-[s\varepsilon_0])N_1+sN}\sum\limits_{v}|E_v^s|\sum\limits_{j\in\mathbb{Z}}\|B_{j-s}^{\eta,t}\|_1\\
&\quad\lesssim2^{-\frac{((\varepsilon_0-\frac{1}{2})N_1-N)s}{2}}\rho\|f\|_1.
\end{array}$$
This together with (2.21) and (2.22) yields (2.20).
\end{proof}

\bigskip

\section{Proof of Theorem \ref{thm1.1}}\label{S3}
We now prove Theorem \ref{thm1.1}.
Let $\Omega\in L\log L(\mathbb{S}^{n-1})$ and $f\in L^1(\mathbb{R}^n)$. It suffices to show that for a fixed $\lambda>0$,
$$\big|\{x\in\mathbb{R}^n:|T_{\Omega,K,a}f(x)|>\lambda\}\big|\lesssim\lambda^{-1}\|\hat{a}\|_1C_\Omega\|f\|_1.\eqno(3.1)$$
In what follows, we assume $\delta\in(0,1)$, $\sigma\in(0,\frac{1}{4})$,
$\epsilon_0\in(\frac{1}{2},\frac{3}{4})$, $\gamma\in(\epsilon_0+\sigma,1)$,
$N=[\frac{n}{2}]+1$, $N_1=[\frac{N+2\sigma}{\epsilon_0-1/2}]+1$, and
$\upsilon=\min\{(\epsilon_0-\frac{1}{2})N_1-N,2(\gamma-\epsilon_0)\}$.
Clearly, $\upsilon>2\sigma$. Given $\lambda>0$, the proof of (3.1) will be divided into four steps.

\medskip

{\bf Step 1. Reduction by Calder\'{o}n--Zygmund decomposition.} Let
$b^{\eta,t},\,g^{\eta,t},\,\tau$ be as in Lemma \ref{lem2.1} with
$\tau=\frac{\lambda}{\|\hat{a}\|_1\|\Omega\|_{L\log L(\mathbb{S}^{n-1})}}$
and $B_j^{\eta,t}$ be defined in (2.1).

\medskip
 
Let $\Theta_{\Omega,K,a}$ be defined in (2.5). In view of Lemma \ref{lem2.1}
and (1.5), we write
$$T_{\Omega,K,a}f(x)=\Theta_{\Omega,K,a}g^{\eta,t}(x)+\Theta_{\Omega,K,a}b^{\eta,t}(x).$$
It follows that
$$\begin{array}{ll}
&\quad\big|\{x\in\mathbb{R}^n:|T_{\Omega,K,a}f(x)|>\lambda\}\big|\\
&\leq\displaystyle\Big|\Big\{x\in\mathbb{R}^n:|\Theta_{\Omega,K,a}g^{\eta,t}(x)|>\frac{\lambda}{2}\Big\}\Big|
+\Big|\Big\{x\in\mathbb{R}^n:|\Theta_{\Omega,K,a}b^{\eta,t}(x)|>\frac{\lambda}{2}\Big\}\Big|.
\end{array}\eqno(3.2)$$
By Chebyshev's inequality, Minkowski's inequality, the $L^2$ boundedness
assumption for $T_{\Omega,K}$ and the properties of $g$, one gets
$$\begin{array}{ll}
&\quad\displaystyle\Big|\Big\{x\in\mathbb{R}^n:|\Theta_{\Omega,K,a}g^{\eta,t}(x)|>\frac{\lambda}{2}\Big\}\Big|\\
&\lesssim\displaystyle\lambda^{-2}\Big(\int_{\mathbb{R}^n}\Big|\iint_{[0,1]\times\mathbb{R}^n}a^{x,t}(\eta)T_{\Omega,K}g^{\eta,t}(x)d\eta dt\Big|^2dx\Big)\\
&\lesssim\displaystyle\lambda^{-2}\Big(\iint_{[0,1]\times\mathbb{R}^n}|\hat{a}(\eta)|\|T_{\Omega,K} g^{\eta,t}\|_2d\eta dt\Big)^2\\
&\lesssim\displaystyle\lambda^{-2}\Big(\iint_{[0,1]\times\mathbb{R}^n}|\hat{a}(\eta)|\|\Omega\|_{L\log L(\mathbb{S}^{n-1})}(\tau\|f\|_1)^{1/2}d\eta dt\Big)^2\\
&\lesssim\lambda^{-1}\|\hat{a}\|_1\|\Omega\|_{L\log L(\mathbb{S}^{n-1})}\|f\|_1.
\end{array}\eqno(3.3)$$
In view of (3.2) and (3.3), for (3.1) it suffices to show that
$$\Big|\Big\{x\in\mathbb{R}^n:|\Theta_{\Omega,K,a}b^{\eta,t}(x)|>\frac{\lambda}{2}\Big\}\Big|\lesssim\lambda^{-1}\|\hat{a}\|_1 C_\Omega\|f\|_1.\eqno(3.4)$$

{\bf Step 2. Reduction for (3.4).} Let $K_j$ be as in (2.3) 
and $\{Q\}_{Q\in\mathcal{B}}$ be as in Lemma \ref{lem2.1}. 
We set 
$$E=\bigcup\limits_{Q\in\mathcal{B}}200Q.$$
Write
$$\Theta_{\Omega,K,a}b^{\eta,t}(x)=\sum\limits_{s\in\mathbb{Z}}
\sum\limits_{j\in\mathbb{Z}}\Theta_{\Omega,K_j,a}B_{j-s}^{\eta,t}(x).\eqno(3.5)$$
Note that $T_{\Omega,K_j}B_{j-s}^{\eta,t}(x)=0$ if $s\leq 9$ and $x\in E^c$.
It follows that $\Theta_{\Omega,K_j,a}B_{j-s}^{\eta,t}(x)=0$ if $s\leq 9$
and $x\in E^c$. In view of (3.5), for (3.4) it suffices to prove that
$$\Big|\Big\{x\in E^c:\Big|\sum\limits_{s>9}\sum\limits_{j\in\mathbb{Z}}\Theta_{\Omega,K_j,a}B_{j-s}^{\eta,t}(x)\Big|>\frac{\lambda}{2}\Big\}\Big|
\lesssim\lambda^{-1}\|\hat{a}\|_1C_\Omega\|f\|_1.\eqno(3.6)$$
Let $K_j^{s,\delta}$ be as in (2.4). Therefore, for (3.6) it suffices
to show that
$$\begin{array}{ll}
&\quad\displaystyle\Big|\Big\{x\in E^c:\Big|\sum\limits_{s>9}\sum\limits_{j\in\mathbb{Z}}(\Theta_{\Omega,K_j,a}B_{j-s}^{\eta,t}(x)-\Theta_{\Omega,K_j^{s,\delta},a}B_{j-s}^{\eta,t}(x))\Big|>\frac{\lambda}{4}\Big\}\Big|\\
&\lesssim\lambda^{-1}\|\hat{a}\|_1C_\Omega\|f\|_1,
\end{array}\eqno(3.7)$$
$$\Big|\Big\{x\in E^c:\Big|\sum\limits_{s>9}\sum\limits_{j\in\mathbb{Z}}\Theta_{\Omega,K_j^{s,\delta},a}B_{j-s}^{\eta,t}(x)\Big|>\frac{\lambda}{4}\Big\}\Big|
\lesssim\lambda^{-1}\|\hat{a}\|_1C_\Omega\|f\|_1.\eqno(3.8)$$
By Chebyshev's inequality, Minkowski's inequality and Lemma \ref{lem2.2},
we have
$$\begin{array}{ll}
&\quad\displaystyle\Big|\Big\{x\in E^c:\Big|\sum\limits_{s>9}\sum\limits_{j\in\mathbb{Z}}(\Theta_{\Omega,K_j,a}B_{j-s}^{\eta,t}(x)-\Theta_{\Omega,K_j^{s,\delta},a}B_{j-s}^{\eta,t}(x))\Big|>\frac{\lambda}{4}\Big\}\Big|\\
&\lesssim\displaystyle\lambda^{-1}\sum\limits_{s>9}\Big\|\sum\limits_{j\in\mathbb{Z}}(\Theta_{\Omega,K_j,a}B_{j-s}^{\eta,t}-\Theta_{\Omega,K_j^{s,\delta},a}B_{j-s}^{\eta,t})\Big\|_1\\
&\lesssim\displaystyle \lambda^{-1}\sum\limits_{s>9}s^{-2}\|\hat{a}\|_1\|\Omega\|_{L^1(\mathbb{S}^{n-1})}\|f\|_1\lesssim \lambda^{-1}\|\hat{a}\|_1\|\Omega\|_{L^1(\mathbb{S}^{n-1})}\|f\|_1.
\end{array}$$
This proves (3.7).

{\bf Step 3. Reduction for (3.8).} We now prove (3.8). Set
$$D_{s}:=\{\theta\in\mathbb{S}^{n-1}:|\Omega(\theta)|\geq 2^{s\sigma}\|\Omega\|_{L^1(\mathbb{S}^{n-1})}\}.$$
We can write
$$\begin{array}{ll}
&\quad\displaystyle\Big|\Big\{x\in E^c:\Big|\sum\limits_{s>9}\sum\limits_{j\in\mathbb{Z}}\Theta_{\Omega,K_j^{s,\delta},a}B_{j-s}^{\eta,t}(x)\Big|>\frac{\lambda}{4}\Big\}\Big|\\
&\leq\displaystyle\Big|\Big\{x\in E^c:\Big|\sum\limits_{s>9}\sum\limits_{j\in\mathbb{Z}}\Theta_{\Omega\chi_{D_{s}},K_j^{s,\delta},a}B_{j-s}^{\eta,t}(x)\Big|>\frac{\lambda}{8}\Big\}\Big|\\
&\quad+\displaystyle\Big|\Big\{x\in E^c:\Big|\sum\limits_{s>9}\sum\limits_{j\in\mathbb{Z}}\Theta_{\Omega\chi_{D_{s}^c},K_j^{s,\delta},a}B_{j-s}^{\eta,t}(x)\Big|>\frac{\lambda}{8}\Big\}\Big|.
\end{array}$$
By Chebyshev's inequality, Minkowski's inequality and Lemma \ref{lem2.2},
we have
$$\begin{array}{ll}
&\quad\displaystyle\Big|\Big\{x\in E^c:\Big|\sum\limits_{s>9}\sum\limits_{j\in\mathbb{Z}}\Theta_{\Omega\chi_{D_{s}},K_j^{s,\delta},a}B_{j-s}^{\eta,t}(x)\Big|>\frac{\lambda}{8}\Big\}\Big|\\
&\lesssim\displaystyle\lambda^{-1}\sum\limits_{s>9}\sum\limits_{j\in\mathbb{Z}}\|\Theta_{\Omega\chi_{D_{s}},K_j^{s,\delta},a}B_{j-s}^{\eta,t}\|_1\\
&\lesssim\displaystyle\lambda^{-1}\|\hat{a}\|_1\sum\limits_{s>9}\|\Omega\chi_{D_{s}}\|_{L^1(\mathbb{S}^{n-1})}\|f\|_1.
\end{array}$$
Note that
$$\sum\limits_{s>9}\|\Omega\chi_{D_{s}}\|_{L^1(\mathbb{S}^{n-1})}\leq\displaystyle\int_{\mathbb{S}^{n-1}}|\Omega(\theta)|{\rm card}\Big\{s\in\mathbb{N}:2^{s\sigma}\leq \frac{|\Omega(\theta)|}{\|\Omega\|_{L^1(\mathbb{S}^{n-1})}}\Big\}d\sigma(\theta)\lesssim C_\Omega.$$
Consequently,
$$\Big|\Big\{x\in E^c:\Big|\sum\limits_{s>9}\sum\limits_{j\in\mathbb{Z}}\Theta_{\Omega\chi_{D_{s}},K_j^{s,\delta},a}B_{j-s}^{\eta,t}(x)\Big|>\frac{\lambda}{8}\Big\}\Big|
\lesssim\lambda^{-1}\|\hat{a}\|_1C_\Omega\|f\|_1.$$
Therefore, for (3.8) it is enough to prove that
$$\Big|\Big\{x\in E^c:\Big|\sum\limits_{s>9}\sum\limits_{j\in\mathbb{Z}}\Theta_{\Omega\chi_{D_{s}^c},K_j^{s,\delta},a}B_{j-s}^{\eta,t}(x)\Big|>\frac{\lambda}{8}\Big\}\Big|
\lesssim\lambda^{-1}\|\hat{a}\|_1C_\Omega\|f\|_1.\eqno(3.9)$$

{\bf Step 4. Proof of (3.9).} Let $\Phi_r$ be as in $(2.6)$ and $\Gamma_{\Omega,
K,a}^{\Phi_r}$, $\Gamma_{\Omega,K,a}^{I-\Phi_r}$ be as in (2.7) and (2.8),
respectively. Let $\gamma\in(0,1)$. Write
$$\Theta_{\Omega\chi_{D_{s}^c},K_j^{s,\delta},a}B_{j-s}^{\eta,t}
=\Gamma_{\Omega\chi_{D_{s}^c},K_j^{s,\delta},a}^{\Phi_{j-s\gamma}}B_{j-s}^{\eta,t}+\Gamma_{\Omega\chi_{D_{s}^c},K_j^{s,\delta},a}^{I-\Phi_{j-s\gamma}}B_{j-s}^{\eta,t}.$$
Then we obtain
$$\begin{array}{ll}
&\quad\displaystyle\Big|\Big\{x\in E^c:\Big|\sum\limits_{s>9}\sum\limits_{j\in\mathbb{Z}}\Theta_{\Omega\chi_{D_{s}^c},K_j^{s,\delta},a}B_{j-s}^{\eta,t}(x)\Big|>\frac{\lambda}{8}\Big\}\Big|\\
&\leq\displaystyle\Big|\Big\{x\in E^c:\Big|\sum\limits_{s>9}\sum\limits_{j\in\mathbb{Z}}\Gamma_{\Omega\chi_{D_{s}^c},K_j^{s,\delta},a}^{\Phi_{j-s\gamma}}B_{j-s}^{\eta,t}(x)\Big|>\frac{\lambda}{16}\Big\}\Big|\\
&\quad+\displaystyle\Big|\Big\{x\in E^c:\Big|\sum\limits_{s>9}\sum\limits_{j\in\mathbb{Z}}\Gamma_{\Omega\chi_{D_{s}^c},K_j^{s,\delta},a}^{I-\Phi_{j-s\gamma}}B_{j-s}^{\eta,t}(x)\Big|>\frac{\lambda}{16}\Big\}\Big|.
\end{array}$$
By Chebyshev's inequality, Minkowski's inequality and Lemma \ref{lem2.3},
we have
$$\begin{array}{ll}
&\quad\displaystyle\Big|\Big\{x\in E^c:\Big|\sum\limits_{s>9}\sum\limits_{j\in\mathbb{Z}}\Gamma_{\Omega\chi_{D_{s}^c},K_j^{s,\delta},a}^{\Phi_{j-s\gamma}}B_{j-s}^{\eta,t}(x)\Big|>\frac{\lambda}{16}\Big\}\Big|\\
&\lesssim\displaystyle\lambda^{-1}\sum\limits_{s>9}\Big\|\sum\limits_{j\in\mathbb{Z}}\Gamma_{\Omega\chi_{D_{s}^c},K_j^{s,\delta},a}^{\Phi_{j-s\gamma}}B_{j-s}^{\eta,t}\Big\|_1\\
&\lesssim\displaystyle\lambda^{-1}\sum\limits_{s>9}\Big\|\sum\limits_{j\in\mathbb{Z}}\sum\limits_{L(Q)=j-s}\Gamma_{\Omega\chi_{D_{s}^c},K_j^{s,\delta},a}^{\Phi_{j-s\gamma}}b_Q^{\eta,t}\Big\|_1\\
&\lesssim\displaystyle\lambda^{-1}\sum\limits_{s>9}(2^{-s(1-\gamma)}+s^{2\delta^{-1}}2^{-s\delta})\|\hat{a}\|_1\|\Omega\chi_{D_{s}^c}\|_{L^1(\mathbb{S}^{n-1})}\|f\|_1\\
&\lesssim\displaystyle\lambda^{-1}\sum\limits_{s>1}
(2^{-s(1-\gamma)}+s^{2\delta^{-1}}2^{-s\delta})\|\hat{a}\|_1\|\Omega\|_{L^1(\mathbb{S}^{n-1})}\|f\|_1\\
&\lesssim\lambda^{-1}\|\hat{a}\|_1\|\Omega\|_{L^1(\mathbb{S}^{n-1})}\|f\|_1.
\end{array}$$
Therefore, for (3.9) it is sufficient to show that
$$\Big|\Big\{x\in E^c:\Big|\sum\limits_{s>9}\sum\limits_{j\in\mathbb{Z}}\Gamma_{\Omega\chi_{D_{s}^c},K_j^{s,\delta},a}^{I-\Phi_{j-s\gamma}}B_{j-s}^{\eta,t}(x)\Big|>\frac{\lambda}{16}\Big\}\Big|
\lesssim \lambda^{-1}\|\hat{a}\|_1\|\Omega\|_{L^1(\mathbb{S}^{n-1})}\|f\|_1.\eqno(3.10)$$

Now we prove (3.10). Let $E_v^s$ and $P_{s,v}$, $Q_{s,v}$ be as in
Section \ref{S2}. Set
$$d_v(x)=\Omega\big(\frac{x}{|x|}\big)\chi_{E_v^s}\big(\frac{x}{|x|}\big)
\chi_{D_{s}^c}\big(\frac{x}{|x|}\big).$$
Then we can write
$$\Gamma_{\Omega\chi_{D_{s}^c},K_j^{s,\delta},a}^{I-\Phi_{j-s\gamma}}B_{j-s}^{\eta,t}
=\sum\limits_{v}\Gamma_{d_v,K_j^{s,\delta},a}^{I-\Phi_{j-s\gamma}}B_{j-s}^{\eta,t}=
\sum\limits_{v}\Gamma_{d_v,K_j^{s,\delta},a}^{P_{s,v}(I-\Phi_{j-s\gamma})}B_{j-s}^{\eta,t}
+\sum\limits_{v}\Gamma_{d_v,K_j^{s,\delta},a}^{Q_{s,v}(I-\Phi_{j-s\gamma})}B_{j-s}^{\eta,t}.$$
It follows that
$$\begin{array}{ll}
&\quad\displaystyle\Big|\Big\{x\in E^c:\Big|\sum\limits_{s>9}\sum\limits_{j\in\mathbb{Z}}\Gamma_{\Omega\chi_{D_{s}^c},K_j^{s,\delta},a}^{I-\Phi_{j-s\gamma}}B_{j-s}^{\eta,t}(x)\Big|>\frac{\lambda}{16}\Big\}\Big|\\
&\leq\displaystyle\Big|\Big\{x\in E^c:\Big|\sum\limits_{s>9}\sum\limits_{v}\sum\limits_{j\in\mathbb{Z}}\Gamma_{d_v,K_j^{s,\delta},a}^{P_{s,v}(I-\Phi_{j-s\gamma})}B_{j-s}^{\eta,t}(x)\Big|>\frac{\lambda}{32}\Big\}\Big|\\
&\quad+\displaystyle\Big|\Big\{x\in E^c:\Big|\sum\limits_{s>9}
\sum\limits_{v}\sum\limits_{j\in\mathbb{Z}}\Gamma_{d_v,K_j^{s,\delta},a}^{Q_{s,v}(I-\Phi_{j-s\gamma})}B_{j-s}^{\eta,t}(x)\Big|>\frac{\lambda}{32}\Big\}\Big|.
\end{array}$$
By Chebyshev's inequality, Minkowski's inequality and Lemma \ref{lem2.4}, we have
$$\begin{array}{ll}
&\quad\displaystyle\Big|\Big\{x\in E^c:\Big|\sum\limits_{s>9}\sum\limits_{v}\sum\limits_{j\in\mathbb{Z}}\Gamma_{d_v,K_j^{s,\delta},a}^{P_{s,v}(I-\Phi_{j-s\gamma})}B_{j-s}^{\eta,t}(x)\Big|>\frac{\lambda}{32}\Big\}\Big|\\
&\lesssim\displaystyle\lambda^{-2}\Big(\sum\limits_{s>9}
\Big\|\sum\limits_{v}\sum\limits_{j\in\mathbb{Z}}\Gamma_{d_v,K_j^{s,\delta},a}^{P_{s,v}(I-\Phi_{j-s\gamma})}B_{j-s}^{\eta,t}\Big\|_2\Big)^2\\
&\lesssim\displaystyle\lambda^{-2}\Big(\sum\limits_{s>9}2^{-s(1/4-\sigma)}\|\Omega\|_{L^1(\mathbb{S}^{n-1})}\|\hat{a}\|_1(\tau\|f\|_1)^{1/2}\Big)^2\\
&\lesssim\lambda^{-1}\|\hat{a}\|_1\|\Omega\|_{L\log L(\mathbb{S}^{n-1})}\|f\|_1.
\end{array}\eqno(3.11)$$
Note that $\frac{\upsilon}{2}-\sigma>0$. By Chebyshev's inequality,
Minkowski's inequality and Lemma \ref{lem2.5}, we have
$$\begin{array}{ll}
&\quad\displaystyle\Big|\Big\{x\in E^c:\Big|\sum\limits_{s>9}
\sum\limits_{v}\sum\limits_{j\in\mathbb{Z}}\Gamma_{d_v,K_j^{s,\delta},a}^{Q_{s,v}(I-\Phi_{j-s\gamma})}B_{j-s}^{\eta,t}(x)\Big|>\frac{\lambda}{32}\Big\}\Big|\\
&\lesssim\displaystyle\lambda^{-1}\sum\limits_{s>9}
\Big\|\sum\limits_{v}\sum\limits_{j\in\mathbb{Z}}\Gamma_{d_v,K_j^{s,\delta},a}^{Q_{s,v}(I-\Phi_{j-s\gamma})}B_{j-s}^{\eta,t}\Big\|_1\\
&\lesssim\displaystyle\lambda^{-1}\|\hat{a}\|_1\sum\limits_{s>9}2^{-(\frac{\upsilon}{2}-\sigma)s}\|\Omega\|_{L^1(\mathbb{S}^{n-1})}\|f\|_1\\
&\lesssim \lambda^{-1}\|\hat{a}\|_1\|\Omega\|_{L^1(\mathbb{S}^{n-1})}\|f\|_1.
\end{array}\eqno(3.12)$$
Combining (3.12) with (3.11) yields (3.10), which completes the proof of
Theorem \ref{thm1.1}.$\hfill\Box$

\bigskip

\quad\hspace{-20pt}{\bf Acknowledgements.}
Feng Liu was supported by the Natural Science Foundation of Shandong Province
(grant No. ZR2023MA022) and the National Natural Science Foundation of China
(grant No. 12326371). Minglei Shi was supported by the National Natural Science
Foundation of China (grant No. 12571108) and the start-up foundation of Zhejiang Shuren University (grant No. 2023R028). The authors appreciate the reviewers for their helpful comments and suggestions that have substantially improved the presentation of this paper.

\medskip

\quad\hspace{-20pt}{\bf Data availability.} No data was used to support this study.

\end{document}